\theoremstyle{definition}
\theoremstyle{definition}
\newtheorem{defn}{Definition}[section]
\theoremstyle{definition}
\theoremstyle{plain}
\newtheorem{thm}[defn]{Theorem}
\newtheorem{lem}[defn]{Lemma}
\newtheorem{cor}[defn]{Corollary}
\newtheorem{prop}[defn]{Proposition}
\theoremstyle{remark}
\newtheoremstyle{case}{}{}{}{}{}{:}{ }{}
\theoremstyle{case}
\newtheorem{case}{\textbf{Case}}
\newcommand{\etal}{\mbox{\textit{et al}.\ }}
\newcommand{\ii}{\mathtt{i}}
\newcommand{\R}{\mathbb{R}}
\newcommand{\Z}{\mathbb{Z}}
\newcommand{\Q}{\mathbb{Q}}
\newcommand{~}{\sim}
\DeclareMathOperator{\Sp}{Sp}
\newcommand{\allone}[1]{{j}_{#1}}
\newcommand{\twovector}[2]{\left[\begin{array}{c} #1 \\ #2 \end{array}\right]}
\NewDocumentCommand\eigenA{mg}{%
	\ensuremath{E_{#1}{\IfNoValueTF{#2}{}{(#2)}}}%
}
	\newcommand{\ket}[1]{\mathbf{#1}}
	\newcommand{\bra}[1]{\mathbf{#1}^{\dagger}}
	\newcommand{\tbra}[1]{\mathbf{#1}^{T}}
	\newcommand{\eket}[1]{{\mathbf{e}}_{#1}}        
	\newcommand{\ebra}[1]{\eket{#1}^{T}}            
	\newcommand{\bra}[1]{\left\langle #1 \right|}
	\newcommand{\ket}[1]{\left| #1 \right\rangle}
	\newcommand{\ebra}[1]{\bra{#1}}
	\newcommand{\eket}[1]{\ket{#1}}
\begin{document}
\title{\bf Perfect state transfer in neighborhood coronas}
\author{Xing-Kun Song\footnote{Email address: \href{mailto:xksong@126.com}{xksong@126.com} (X.-K. Song)} \\[2mm]
	\small School of Mathematics, East China University of Science and Technology, \\
	\small  Shanghai 200237, P.R. China\\}

\date{}
\maketitle
{\flushleft\large\bf Abstract}
The neighborhood corona $G \star H$ is the graph obtained by taking one copy $G$ and $|G|$ copies of $H$, and joining each vertex of the $j$th copy of $H$ to all neighbors of $v_{j}$ in $G$. In this paper, we study the state transfer of neighborhood coronas related to the adjacency matrix. Concretely, we provide some necessary conditions under which the neighborhood corona $G \star H$ admits perfect state transfer, and obtain a new family of graphs with pretty good state transfer based on neighborhood coronas.

\begin{flushleft}
\textbf{Keywords:} Quantum walk; perfect state transfer; pretty good state transfer; neighborhood corona.
\end{flushleft}
\textbf{AMS Classification:} 05C50; 15A18; 81P45; 81P68

\section{Introduction}

A continuous-time quantum walk is a quantum walk on a given graph that is dictated by a transition matrix which relies on the Hamiltonian of the quantum system and the adjacency matrix. The concept was  introduced by Farhi and Gutmann \cite{FG98} for
quantum computation in 1998. Let $G$ be a graph with adjacency matrix $A_{G}$. The transition matrix of $G$ is defined by
\begin{equation*}
U(t) = \exp(-\ii t A_{G}) = \sum_{k \geqslant 0} \frac{(-\ii)^{k} A_{G}^{k} t^{k}}{k!}, t \in \R^{+}, \ii = \sqrt{-1}.
\end{equation*}
Note that $U(t)$ is both symmetric and unitary.
Given two distinct vertices $u$ and $v$ in $G$, we say that $G$ admits {\em perfect state transfer} from vertices $u$ to $v$ if there exists a time $t \in \R^{+}$ and a complex unimodular scalar $\gamma$ such that
\begin{equation} \label{equ::2}
U(t)\eket{u} = \gamma \eket{v}.
\end{equation}
Here $\gamma$ is called the {\em phase} of perfect state transfer. The concept of perfect state transfer was introduced by Bose \cite{B03} in 2003.
In particular,  if $u = v$ in Eq. \eqref{equ::2},
we say that $G$ is {\em periodic at vertex $u$}. Furthermore, if $U(t)$ is a scalar multiple of the identity matrix, then $G$ is {\em periodic}. In the past two decades, the graphs with perfect state transfer have aroused a lot of interest, such graphs are rare. For this reason, Godsil \cite{g11-dm} proposed the concept of pretty good state transfer. We say that a graph $G$ has {\em pretty good state transfer} from $u$ to $v$ if
for any $\varepsilon > 0$, there exists a time $t \in \R^{+}$ such that
\begin{equation*}
|U(t)_{u,v}| > 1 - \varepsilon.
\end{equation*}

Up to now, some interesting results have been obtained for perfect state transfer or pretty good state transfer on graphs. Godsil \cite{g12} proved that for any integer $k$ there are only finitely many graphs of maximum degree $k$ admitting perfect state transfer.
Christandl et al. \cite{CDDEKL05,CDEL04} observed that the path $P_{n}$ on $n$ vertices
admits antipodal perfect state transfer if and only if $n=2,3$.
Fan and Godsil  \cite{fg13} showed that a double star of order $4m+1$ admits  pretty good state transfer between the two central vertices if and only if $4m+1$ is not a perfect square. van Bommel \cite{vB19} obtained a complete characterization of pretty good state transfer on paths.
Pal et al. \cite{PB173,Pa18,Pa19} investigated the existence of pretty  good state transfer on circulant graphs. Ackelsberg et al. \cite{ABCMT17} studied perfect state transfer and pretty good state transfer based on coronas. In recent years, perfect state transfer or pretty good state transfer based on some other graph products such as edge corona,  edge complemented corona and NEPS are also investigated by researchers \cite{PB16,PB17,PB172,ABCMT16,TYC19,ZLZ20,LLZ20,LLZZ21,LZ21,WL21}. For more results on  this topic, we refer the reader to \cite{cggv15,ANOPRT10,Ba11,Co16,CG11,ZBS20} and the surveys \cite{g11,KT11,CG21}.

The concept of the neighborhood corona was introduced by Gopalapillai \cite{Go11} in 2011.  Given two graphs $G$ and $H$ on $n$ and $m$ vertices, respectively, the \textit{neighborhood corona of} $G$ and $H$, denoted by $G \star H$, is the graph obtained by taking one copy of $G$ and $n$ copies of $H$, all vertex-disjoint, and joining every neighbor of the $i$th vertex of $G$ to every vertex in the $i$th copy of $H$ by a new edge.  In this paper, we consider a new family of graphs with perfect state transfer and pretty good state transfer based on neighborhood coronas. We find that there is no perfect state transfer on neighborhood coronas except some special situations, and we construct a family of graphs with pretty good state transfer based on neighborhood coronas.

The rest of the paper is organized as the follows. In Section \ref{sec::2}, some basic concepts and useful results are given. In Section \ref{sec::3}, we compute eigenvalues, eigenprojectors, and spectral decompositions of neighborhood coronas, respectively. In Section \ref{sec::4} and Section \ref{sec::5}, we investigate the existence of perfect state transfer and pretty good state transfer in neighborhood coronas.
\section{Preliminaries}\label{sec::2}

Let $G$ be a graph on $n$ vertices. The \textit{adjacency matrix} of $G$, denoted by $A_{G}$, is the $n\times n$ matrix such that for a pair of vertices $u$ and $v$, the $(u,v)$ entry of $A_{G}$ is $1$ if $u,v$ are adjacent, and $0$ otherwise.
The \textit{spectrum} of $G$, denoted by $\Sp(G)$, is the multiset of eigenvalues of $A_{G}$, and the \textit{spectral radius} of $G$, denoted by $\rho(G)$, is the largest eigenvalue of  $G$.

Let $\lambda_1,\ldots,\lambda_d$ denote all distinct eigenvalues of  $A_{G}$. By the spectral decomposition, we have
\begin{equation*}
	A_{G}
= \sum_{r=1}^{d} \lambda_{r} E_{\lambda_{r}},
\end{equation*}
where $E_{\lambda_{r}}$ is the eigenprojector corresponding to $\lambda_{r}$. Note that $\sum_{r=1}^{d} E_{\lambda_{r}} =I$, $E_{\lambda_{r}}^{2}=E_{\lambda_{r}}$, and
$E_{\lambda_{r}} E_{\lambda_{s}} = \mathbf{0}$ for $r \neq s$, where $I$ and $\mathbf{0}$ are  the identity matrix and the zero matrix. Then we see that
\begin{equation*}
U_{A_{G}}(t) =
\sum_{k \geqslant 0} \frac{(-\ii)^{k} A_{G}^{k} t^{k}}{k!} =
\sum_{k \geqslant 0} \frac{(-\ii)^{k} (\sum_{r=1}^{d} \lambda_{r} E_{\lambda_{r}})^{k} t^{k}}{k!} =
\sum_{r=1}^{d} \exp(-\ii t \lambda_{r}) E_{\lambda_{r}}.
\end{equation*}

The {\em eigenvalue support} of a vertex $u$ in $G$, denoted by $\Phi_{u}$,
is the set of all eigenvalues $\lambda$ of $A_{G}$ such that $\eigenA{\lambda}{G}\eket{u} \neq 0$,
where $\eket{u}$ is the characteristic vector corresponding to $u$.
Two vertices $u$ and $v$ of $G$ are called {\em strongly cospectral} if
$\eigenA{\lambda}{G}\eket{u} = \pm\eigenA{\lambda}{G}\eket{v}$
for every eigenvalue $\lambda$ of $A_{G}$.

First of all, we state some useful results about perfect state transfer and periodicity of graphs.

Given a rational $m$ and a prime $p$, we can write $m = p^\alpha\frac{r}{s}$, where $r$ and
$s$ are integers not divisible by $p$. Then the $p$-adic norm of $m$ is defined by
\[
|m|_{p} = p^{-\alpha}.
\]

Thus, if $m$ is integer, the larger the power of $p$ dividing $m$ is, the smaller
its $p$-adic norm is.

\begin{thm}[Coutinho and Godsil \cite{CG21}] \label{thm::2.1}
Let $G$ be a graph and $u$ and $v$ be two vertices of $G$, and assume the eigenvalue support of vertex $u$ consists of eigenvalues $\lambda_{0}> \cdots> \lambda_{k}$. Then $G$ admits perfect state transfer between $u$ and $v$ if and only if:
\begin{enumerate}[(i)]
\item the two vertices $u$ and $v$ are strongly cospectral;

\item the eigenvalues in $\Phi_{u}$ are either integers or quadratic integers, and, moreover, there are integers $a, \Delta, b_{0}, \ldots, b_{k}$, with $\Delta$ positive and square-free, so that
\begin{equation*}
	\lambda_{r}=\frac{1}{2}(a+b_{r} \sqrt{\Delta});
\end{equation*}
\item  there is a non-negative integer $\alpha$ so that
\begin{itemize}
	\item $\left(E_{r}\right)_{a, b}>0$ if and only if $|\left(\lambda_{0}-\lambda_{r}\right) / \sqrt{\Delta}|_{2}<2^{-\alpha}$,
	\item $\left(E_{r}\right)_{a, b}<0$ if and only if $|\left(\lambda_{0}-\lambda_{r}\right) / \sqrt{\Delta}|_{2}=2^{-\alpha}$.
\end{itemize}

\end{enumerate}
If the above conditions hold, let
\begin{equation*}
g=\operatorname{gcd}\left( \left\{\frac{\lambda_{0}-\lambda_{r}}{\sqrt{\Delta}}:r=0, \ldots, k\right\}\right),
\end{equation*}
then the minimum time we have perfect state transfer between $a$ and $b$ is $\tau=\pi / g \sqrt{\Delta}$, and any other time it occurs is an odd multiple of $\tau$.
\end{thm}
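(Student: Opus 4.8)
The plan is to work entirely with the spectral decomposition $U(t)=\sum_{r} e^{-\ii t\lambda_r}E_r$ and to exploit mutual orthogonality of the eigenprojectors. If $U(t)\eket{u}=\gamma\eket{v}$, applying $E_r$ to both sides (using $E_rE_s=\mathbf{0}$ for $r\ne s$ and $E_r^2=E_r$) yields $e^{-\ii t\lambda_r}E_r\eket{u}=\gamma E_r\eket{v}$ for every $r$, and conversely summing these over $r$ recovers $U(t)\eket{u}=\gamma\eket{v}$; so PST is equivalent to this family of scalar-vector equations. Since $A_G$ is real symmetric, each $E_r$ is real, hence $E_r\eket{u}$ and $E_r\eket{v}$ are real vectors; for any $r$ with $E_r\eket{u}\ne 0$ this forces $E_r\eket{v}\ne 0$ and the unimodular scalar $\gamma^{-1}e^{-\ii t\lambda_r}$ to be real, i.e.\ equal to some $\sigma_r\in\{+1,-1\}$, while for $r$ with $E_r\eket{u}=0$ we get $E_r\eket{v}=0$. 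This already gives $\Phi_u=\Phi_v$ and condition (i) (strong cospectrality, with $E_r\eket{u}=\sigma_r E_r\eket{v}$), and since $(E_r)_{u,v}=(E_r\eket{u})^{T}(E_r\eket{v})=\sigma_r\lVert E_r\eket{v}\rVert^2$, the sign $\sigma_r$ is exactly the sign of $(E_r)_{u,v}$.

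Next I would normalise. Writing $\Phi_u=\{\lambda_0>\cdots>\lambda_k\}$ and dividing the $r$th equation by the $0$th, PST becomes equivalent to the existence of $t\in\R^{+}$ with
\begin{equation*}
 e^{\ii t(\lambda_0-\lambda_r)}=\sigma_0\sigma_r\in\{+1,-1\}\qquad(r=0,\dots,k),
\end{equation*}
from which $\gamma=\sigma_0 e^{-\ii t\lambda_0}$ can be read off. In particular $2t$ realises periodicity at $u$, every $t(\lambda_0-\lambda_r)$ is an integer multiple of $\pi$, and hence all the differences $\lambda_0-\lambda_r$ are pairwise commensurable. Now comes the arithmetic heart: the $\lambda_r$ are algebraic integers (eigenvalues of the integer matrix $A_G$), the Galois conjugates of any $\lambda_r\in\Phi_u$ again lie in $\Phi_u$ (apply a Galois automorphism to $E_{\lambda_r}\eket{u}\ne 0$, using that $\eket{u}$ is rational and $\sigma(E_{\lambda})=E_{\sigma(\lambda)}$), and commensurability of all differences among all these conjugates is incompatible with an eigenvalue of degree $\ge 3$. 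One deduces that every eigenvalue in $\Phi_u$ is rational or lies in one fixed real quadratic field $\Q(\sqrt{\Delta})$, $\Delta>0$ square-free; comparing each $\lambda_r-\overline{\lambda_r}=b_r\sqrt{\Delta}$ (a difference of conjugates, hence commensurable with the others) then pins down the common $\Delta$ and, via commensurability of $\lambda_r-\lambda_s$ with $\sqrt{\Delta}$, the common $a$, giving $\lambda_r=\tfrac12(a+b_r\sqrt{\Delta})$ — this is condition (ii).

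With (ii) in hand, $\lambda_0-\lambda_r=\mu_r\sqrt{\Delta}$ with $\mu_r:=\tfrac12(b_0-b_r)\in\Z$ (integrality of the $\mu_r$ follows from $\lambda_r$ being a quadratic integer), so the PST equation reads $(t\sqrt{\Delta}/\pi)\,\mu_r\in\Z$ with even value precisely when $\sigma_0\sigma_r=+1$ and odd value precisely when $\sigma_0\sigma_r=-1$. Put $g=\gcd\{\mu_0,\dots,\mu_k\}=\gcd\{(\lambda_0-\lambda_r)/\sqrt{\Delta}\}$ and $\tau=\pi/(g\sqrt{\Delta})$; then $t=\tau$ makes every $t\sqrt{\Delta}\mu_r/\pi=\mu_r/g$ an integer, and what remains is to check that $\mu_r/g$ is even iff $\sigma_0\sigma_r=+1$. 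Writing $\alpha=-\log_2\lvert g\rvert_2$, the statement ``$\mu_r/g$ even'' is $\lvert\mu_r\rvert_2<\lvert g\rvert_2$, i.e.\ $\lvert(\lambda_0-\lambda_r)/\sqrt{\Delta}\rvert_2<2^{-\alpha}$, and ``$\mu_r/g$ odd'' is the corresponding equality; since $\sigma_r$ is the sign of $(E_r)_{u,v}$ and $\sigma_0=+1$ (the top support eigenvalue $\lambda_0$ is the Perron value of $u$'s component, so $(E_0)_{u,v}>0$), this is exactly condition (iii). For minimality and the ``odd multiple'' claim: any valid $t'$ must satisfy $(t'\sqrt{\Delta}/\pi)\mu_r\in\Z$ for all $r$, hence $t'\sqrt{\Delta}g/\pi\in\Z$, i.e.\ $t'\in\tau\Z$; and since $\eket{u}\ne\pm\eket{v}$ forces some $r$ with $\sigma_0\sigma_r=-1$, the parity requirement forces $t'/\tau$ to be odd. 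The converse direction is then just assembling (i)--(iii) to produce $t=\tau$ and verifying the phase equations.

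The step I expect to be the real obstacle is the middle one: deducing from the periodicity/commensurability condition that the supported eigenvalues are forced into the single common shape $\tfrac12(a+b_r\sqrt{\Delta})$. This needs the Galois-stability of the eigenvalue support together with a genuine degree argument ruling out cubic or higher eigenvalues, and then the bookkeeping that identifies one square-free $\Delta$ and one integer $a$ valid across all $r$. By contrast, condition (iii) is essentially a clean $2$-adic repackaging of a parity requirement once (ii) is available, and the converse direction and the minimum-time formula are routine.
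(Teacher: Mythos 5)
The paper itself offers no proof of Theorem \ref{thm::2.1}: it is quoted verbatim from Coutinho and Godsil \cite{CG21}, so there is no in-paper argument to compare you against; what you have written is, in outline, the standard proof from that source. The parts you actually argue are correct: the reduction of perfect state transfer to the projector equations $e^{-\ii t\lambda_r}E_r\eket{u}=\gamma E_r\eket{v}$ via orthogonality, the reality argument giving strong cospectrality with signs $\sigma_r$ equal to the signs of $(E_r)_{u,v}$, the Perron argument for $\sigma_0=+1$ (which tacitly uses that $u$ and $v$ lie in the same component --- automatic, since otherwise every $(E_r)_{u,v}=0$ would contradict strong cospectrality on a nonempty support), the integrality of $\mu_r=\tfrac12(b_0-b_r)$ from the quadratic-integer condition, the translation of the parity requirement into the $2$-adic statement (iii) with $\alpha$ the $2$-adic valuation of $g$ (respectively of the denominator of $t\sqrt{\Delta}/\pi$ in the forward direction, where you should say explicitly that this number is rational with odd numerator, oddness coming from the fact that some sign $\sigma_0\sigma_r=-1$ must occur since $\eket{u}\neq\pm\eket{v}$), and the minimality/odd-multiple argument via $g$ being an integer combination of the $\mu_r$. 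The one place where you assert rather than prove is the step you yourself flag as the obstacle: that Galois stability of $\Phi_u$ plus commensurability of the differences rules out eigenvalues of degree at least $3$ and forces the single shape $\lambda_r=\tfrac12(a+b_r\sqrt{\Delta})$ with one $a$ and one square-free $\Delta$. As written (``one deduces'') this is a gap; but it is precisely Godsil's periodicity characterization, which this paper quotes as Theorem \ref{lem::2.3} from \cite{g12}, so you can either cite that result at this point or supply the trace/conjugation argument in full --- with that reference inserted, your proof is complete and follows the same route as the cited original.
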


\begin{lem}[Godsil \cite{g11}] \label{lem::2.2}
If $G$ has perfect state transfer between vertices $u$ and $v$ at time $t$,
then $G$ is periodic at $u$ at time $2t$.
\end{lem}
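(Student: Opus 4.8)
The plan is to use only the two elementary facts about the transition matrix recorded above: for every $t$ the matrix $U(t)$ is symmetric and unitary, and it is multiplicative in the sense that $U(s+t) = U(s)U(t)$ (immediate from the spectral formula $U(t) = \sum_{r} \exp(-\ii t\lambda_{r})E_{\lambda_{r}}$ together with $\sum_{r}E_{\lambda_r} = I$ and $E_{\lambda_r}E_{\lambda_s} = \mathbf{0}$ for $r \neq s$). In particular $U(2t) = U(t)^{2}$, so it suffices to understand how $U(t)$ acts on $\eket{u}$ and on $\eket{v}$, and the goal is to show $U(2t)\eket{u} = \gamma^{2}\eket{u}$, where $\gamma$ is the phase of the given perfect state transfer.

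By hypothesis there is a unimodular $\gamma$ with $U(t)\eket{u} = \gamma\eket{v}$. Reading this off entrywise says the $u$-th column of $U(t)$ is $\gamma\eket{v}$; since $U(t)$ is symmetric, its $u$-th row is $\gamma\ebra{v}$, and in particular $\bigl(U(t)\bigr)_{u,v} = \gamma$. Now consider $U(t)\eket{v}$, i.e.\ the $v$-th column of $U(t)$: because $U(t)$ is unitary this is a unit vector, and its $u$-th entry equals $\bigl(U(t)\bigr)_{u,v} = \gamma$, which already has modulus $1$; hence every other entry must vanish and $U(t)\eket{v} = \gamma\eket{u}$. The only step here requiring a word of justification is this last implication, which is just the remark that a unit vector with a coordinate of modulus $1$ is necessarily a unimodular multiple of a standard basis vector; this follows from $\sum_{w}\bigl|(U(t))_{w,v}\bigr|^{2} = 1$.

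Combining the two relations gives
\[
U(2t)\eket{u} = U(t)\bigl(U(t)\eket{u}\bigr) = U(t)\bigl(\gamma\eket{v}\bigr) = \gamma\,U(t)\eket{v} = \gamma^{2}\eket{u},
\]
and since $|\gamma^{2}| = 1$ this is precisely the assertion that $G$ is periodic at $u$ at time $2t$ (with phase $\gamma^{2}$). I do not expect any genuine obstacle in this argument; the only substantive point is the observation that symmetry together with unitarity forces perfect state transfer from $u$ to $v$ to be accompanied by perfect state transfer from $v$ back to $u$ with the same phase, after which periodicity at $u$ drops out of the semigroup property $U(2t) = U(t)^2$.
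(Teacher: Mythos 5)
Your proof is correct, and it is the standard argument: the paper itself states this lemma as a quoted result of Godsil without proof, and your reasoning (symmetry plus unitarity of $U(t)$ forces $U(t)\eket{v}=\gamma\eket{u}$, then $U(2t)=U(t)^{2}$ gives $U(2t)\eket{u}=\gamma^{2}\eket{u}$) is exactly the usual derivation. No gaps; the key step, that a unit column with an entry of modulus $1$ has all other entries zero, is justified correctly.
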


\begin{thm}[Godsil \cite{g12}] \label{lem::2.3}
A graph $G$ is periodic at vertex $u$ if and only if either:
\begin{enumerate}[i)]
\item all eigenvalues in $\Phi_{u}$ are integers; or
\item there is a square-free integer $\Delta$ and an integer $a$ so that
	each eigenvalue $\lambda$ in $\Phi_{u}$ is of the form
	$\lambda = \frac{1}{2}(a + b_{\lambda}\sqrt{\Delta})$,
	for some integer $b_{\lambda}$.
\end{enumerate}
\end{thm}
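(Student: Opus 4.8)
The plan is to convert periodicity into an arithmetic statement about the eigenvalue support and then bring in Galois theory. Write $\Phi_u=\{\theta_0>\cdots>\theta_k\}$ and recall $U(t)=\sum_r e^{-\ii t\lambda_r}E_{\lambda_r}$, so that $U(t)\eket{u}=\sum_{\theta_j\in\Phi_u}e^{-\ii t\theta_j}E_{\theta_j}\eket{u}$ while $\eket{u}=\sum_{\theta_j\in\Phi_u}E_{\theta_j}\eket{u}$. Since the vectors $E_{\theta_j}\eket{u}$ with $\theta_j\in\Phi_u$ are nonzero and mutually orthogonal, $U(t)\eket{u}=\gamma\eket{u}$ forces $e^{-\ii t\theta_j}=\gamma$ for every $\theta_j\in\Phi_u$; hence $G$ is periodic at $u$ iff there is $t\in\R^{+}$ with $t(\theta_i-\theta_j)\in 2\pi\Z$ for all $i,j$. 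A routine commensurability argument shows this happens exactly when the differences $\theta_i-\theta_j$ are pairwise rational multiples of each other. Assuming $k\geqslant 1$, fix $\beta:=\theta_0-\theta_1$ and write $\theta_j=\theta_0-\alpha_j\beta$ with $\alpha_j\in\Q$ (and $\alpha_0=0$, $\alpha_1=1$); if $k=0$ then $\Phi_u$ is a single eigenvalue which, being fixed by every automorphism of $\overline{\Q}$ (see below), is a rational algebraic integer, so we are in case (i).

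Next I would show $\Phi_u$ is closed under $\mathrm{Gal}(\overline{\Q}/\Q)$. Each eigenprojector is the rational polynomial $E_\theta=\prod_{\mu\neq\theta}(A_G-\mu I)/(\theta-\mu)$ in $A_G$, and both $A_G$ and $\eket{u}$ have rational entries, so any $\sigma\in\mathrm{Gal}(\overline{\Q}/\Q)$ carries $E_\theta\eket{u}$ to $E_{\sigma(\theta)}\eket{u}$; thus $E_\theta\eket{u}\neq 0$ iff $E_{\sigma(\theta)}\eket{u}\neq 0$. Consequently every power sum $\sum_{\theta_j\in\Phi_u}\theta_j^p$ is Galois-invariant, hence a rational algebraic integer, hence an element of $\Z$. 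From $\sum_j\theta_j=(k+1)\theta_0-(\sum_j\alpha_j)\beta\in\Z$ with $k+1\geqslant 2$, I deduce that $\theta_0$, and therefore every $\theta_j=\theta_0-\alpha_j\beta$, is a $\Q$-linear function of $\beta$; in particular $\beta$ generates the same field as any irrational $\theta_j$.

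The arithmetic core is to bound $d:=[\Q(\beta):\Q]$. Applying $\sigma$ to $\theta_i-\theta_j=(\alpha_j-\alpha_i)\beta$ and using that $\sigma$ permutes $\Phi_u$ gives $\sigma(\beta)=c_\sigma\beta$ with $c_\sigma\in\Q^{\times}$; so every conjugate of the algebraic integer $\beta$ is a rational multiple of $\beta$, and all these conjugates are roots of the integral characteristic polynomial of $A_G$, hence real. Writing the minimal polynomial of $\beta$ as $\prod_{i=1}^{d}(x-c_i\beta)$ with distinct $c_i\in\Q^{\times}$ (one of them $1$), rationality of the coefficient of $x^{d-1}$ forces $\sum_i c_i=0$ when $d\geqslant 2$, so $\sum_i c_i^{2}=-2\,e_2(c)$ where $e_2(c)=\sum_{i<j}c_ic_j$; rationality of the coefficient of $x^{d-2}$, namely $e_2(c)\beta^{2}$, then forces either $e_2(c)=0$, which is impossible since it would make $\sum c_i^{2}=0$ with the $c_i$ nonzero reals, or $\beta^{2}\in\Q$, i.e. $d\leqslant 2$. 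If $d=1$ all eigenvalues in $\Phi_u$ are rational, hence integers, which is case (i). If $d=2$ the minimal polynomial forces $\beta^{2}\in\Z$, so $\beta=m\sqrt{\Delta}$ with $\Delta$ a square-free integer $\neq1$; every $\theta_j$ lies in $\Q(\sqrt{\Delta})$, and since $\theta_j$ together with its conjugate $\overline{\theta_j}\in\Phi_u$ is an algebraic integer, $\theta_j+\overline{\theta_j}\in\Z$ and $\theta_j\overline{\theta_j}\in\Z$, which (using that $\Delta$ is square-free) yields $\theta_j=\frac{1}{2}(a_j+b_j\sqrt{\Delta})$ with $a_j,b_j\in\Z$; finally $\theta_i-\theta_j\in\Q\sqrt{\Delta}$ forces $a_i=a_j$, giving the single integer $a$ of case (ii). The converse is immediate: in case (i) take $t=2\pi$, and in case (ii) take $t$ a suitable rational multiple of $\pi/\sqrt{\Delta}$ so that each $t(\theta_i-\theta_j)\in 2\pi\Z$. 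The main obstacle is the degree bound $d\leqslant 2$ in the last paragraph; the remaining steps are bookkeeping.
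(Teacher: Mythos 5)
This statement is quoted in the paper as a known result of Godsil \cite{g12}; the paper itself gives no proof, so there is nothing internal to compare against. Your argument is a correct self-contained reconstruction, and it follows the same general strategy as Godsil's original: reduce periodicity at $u$ to the condition that all differences of eigenvalues in $\Phi_u$ are commensurable, use that $\Phi_u$ is closed under the action of $\mathrm{Gal}(\overline{\Q}/\Q)$ (via the polynomial expression of the eigenprojectors and the rationality of $A_G$ and $\eket{u}$) to see that every conjugate of $\beta=\theta_0-\theta_1$ is a nonzero rational multiple of $\beta$, and then bound $[\Q(\beta):\Q]\leqslant 2$ by looking at the first two coefficients of the minimal polynomial, with realness of the conjugates ruling out the degenerate case $e_2(c)=0$. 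The subsequent bookkeeping (trace and norm in $\Z$, square-freeness of $\Delta$ forcing half-integer coordinates, and the common value of $a$ coming from the differences lying in $\Q\sqrt{\Delta}$) is sound, as is the converse with $t=2\pi$ resp.\ $t=4\pi/\sqrt{\Delta}$. Two harmless slips: the conjugates of $\beta$ are differences of roots of the characteristic polynomial (not roots themselves), which is all you need for realness; and you do not need $\overline{\theta_j}\in\Phi_u$ to conclude $\theta_j+\overline{\theta_j},\ \theta_j\overline{\theta_j}\in\Z$, since conjugates of algebraic integers are algebraic integers and these symmetric functions are Galois-fixed. Neither affects correctness.
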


To consider pretty good state transfer of graphs, we need the following form of Kronecker's approximation theorem.

\begin{thm}[Kronecker's Theorem \cite{hw00}] \label{thm::2.4}
Let $1,\,\lambda_{1},\,\ldots,\,\lambda_{m}$ be linearly independent over rational numbers $\Q$.
Let $\alpha_{1},\, \ldots,\, \alpha_{m}$ be arbitrary real numbers,
and let $N,\,\varepsilon$ be positive real numbers.
Then there are integers $\ell > N$ and $q_1,\,\ldots, \,q_m$
so that
\begin{equation}\label{eq::8}
|\ell\lambda_{k} - q_{k} - \alpha_{k}| < \varepsilon,
\end{equation}
for each $k=1,\,\ldots,\,m$.
\end{thm}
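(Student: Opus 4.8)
The plan is to reduce Theorem~\ref{thm::2.4} to a density statement on the $m$-torus $\mathbb{T}^{m}=\R^{m}/\Z^{m}$, equipped with the quotient metric induced by the sup-norm on $\R^{m}$. For suitable integers $q_{1},\ldots,q_{m}$ the point $(\ell\lambda_{1}-q_{1},\ldots,\ell\lambda_{m}-q_{m})$ is exactly the image of $\ell\boldsymbol{\lambda}$ under the projection $\R^{m}\to\mathbb{T}^{m}$, where $\boldsymbol{\lambda}=(\lambda_{1},\ldots,\lambda_{m})$, and \eqref{eq::8} says precisely that this image lies within distance $\varepsilon$ of the image of $\boldsymbol{\alpha}=(\alpha_{1},\ldots,\alpha_{m})$. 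So I would first observe that it suffices to prove the orbit $\mathcal{O}=\{\ell\boldsymbol{\lambda}\bmod\Z^{m}:\ell\in\Z\}$ is dense in $\mathbb{T}^{m}$, and then separately check that the auxiliary requirement $\ell>N$ can be arranged.

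For the density step I would let $H$ be the closure of $\mathcal{O}$; since $\mathcal{O}$ is a subgroup and $\mathbb{T}^{m}$ is a topological group, $H$ is a closed subgroup. Then I would argue by contradiction: if $H\neq\mathbb{T}^{m}$, the structure theory of closed subgroups of the torus (equivalently Pontryagin duality for $\mathbb{T}^{m}$) yields a nonzero $\mathbf{c}=(c_{1},\ldots,c_{m})\in\Z^{m}$ whose character annihilates $H$, so $\sum_{k}c_{k}h_{k}\in\Z$ for all $h\in H$; evaluating at $h=\boldsymbol{\lambda}\bmod\Z^{m}$ gives $\sum_{k}c_{k}\lambda_{k}=c_{0}$ for some $c_{0}\in\Z$, a nontrivial $\Q$-linear relation among $1,\lambda_{1},\ldots,\lambda_{m}$, contrary to hypothesis. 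Hence $H=\mathbb{T}^{m}$, giving density and therefore, for the prescribed $\varepsilon$, an integer $\ell$ satisfying \eqref{eq::8}.

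To upgrade to $\ell>N$ I would note that the hypothesis forces each $\lambda_{k}$ to be irrational (a rational $\lambda_{k}=p/q$ gives the relation $q\lambda_{k}-p\cdot1=0$), so $\ell\mapsto\ell\boldsymbol{\lambda}\bmod\Z^{m}$ is injective; then the open $\varepsilon$-ball about $\boldsymbol{\alpha}$, being nonempty and met by the dense set $\mathcal{O}$, contains infinitely many points of $\mathcal{O}$, each arising from a distinct integer $\ell$, so one of them exceeds $N$. The only genuinely non-routine ingredient is the structural input in the second paragraph, namely that a proper closed subgroup of $\mathbb{T}^{m}$ is killed by some nonzero integer character. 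If I wanted a self-contained argument I would instead invoke Weyl's equidistribution theorem, whose proof reduces via the Weyl criterion to showing $\frac{1}{L}\sum_{\ell=1}^{L}e^{2\pi\ii(\mathbf{c}\cdot\boldsymbol{\lambda})\ell}\to0$ for every nonzero $\mathbf{c}\in\Z^{m}$; this is immediate by summing the geometric series and using $\mathbf{c}\cdot\boldsymbol{\lambda}\notin\Z$ (the same $\Q$-linear independence). Equidistribution of $(\ell\boldsymbol{\lambda}\bmod\Z^{m})_{\ell\geq1}$ then delivers both the density and the abundance of admissible $\ell$ at once. Either way the arithmetic hypothesis is used in exactly one place, to forbid a relation $\mathbf{c}\cdot\boldsymbol{\lambda}\in\Z$, and an elementary induction on $m$ in the style of Hardy and Wright remains available as a further alternative.
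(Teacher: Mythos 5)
The paper does not prove this statement at all: it is quoted as a classical result with a citation to Hardy and Wright, so there is no in-paper argument to compare with, and the relevant question is only whether your proof of Kronecker's theorem stands on its own. In substance it does. The reduction to density of the orbit $\{\ell\boldsymbol{\lambda}\bmod\Z^{m}:\ell\in\Z\}$ in $\mathbb{T}^{m}$ is the right reformulation, and your character argument is correct: a proper closed subgroup of $\mathbb{T}^{m}$ is annihilated by a nonzero integer character $\mathbf{c}$, and evaluating at $\boldsymbol{\lambda}$ yields $c_{1}\lambda_{1}+\cdots+c_{m}\lambda_{m}\in\Z$, contradicting the $\Q$-independence of $1,\lambda_{1},\ldots,\lambda_{m}$. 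This duality route (or your Weyl-criterion variant, whose verification via the geometric series and $\mathbf{c}\cdot\boldsymbol{\lambda}\notin\Z$ is also correct) is a standard modern proof; Hardy and Wright instead give an elementary induction on $m$, which you rightly note as an alternative. The trade-off is the usual one: your argument is shorter but imports Pontryagin-type structure theory or Weyl equidistribution, while the textbook induction is self-contained.

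One step in your closed-subgroup route needs tightening, namely the upgrade to $\ell>N$. From ``the $\varepsilon$-ball meets $\mathcal{O}$ in infinitely many points, each arising from a distinct integer $\ell$'' you conclude that some $\ell$ exceeds $N$; but since your $\mathcal{O}$ is the two-sided orbit, infinitely many distinct integers could in principle all be $\leqslant N$ (tending to $-\infty$), so the conclusion does not follow as stated. The gap is easy to close: either show directly that the forward orbit is dense (the closure of $\{\ell\boldsymbol{\lambda}\bmod\Z^{m}:\ell\geqslant 1\}$ is a closed subsemigroup of the compact group $\mathbb{T}^{m}$, hence a closed subgroup containing $\boldsymbol{\lambda}$, hence all of $\mathbb{T}^{m}$ by your own character argument), or first pick any integer $\ell_{0}$ with $\ell_{0}\boldsymbol{\lambda}$ within $\varepsilon/2$ of $\boldsymbol{\alpha}$ modulo $\Z^{m}$ and then add a single $p>N-\ell_{0}$ whose multiples of $\boldsymbol{\lambda}$ lie within $\varepsilon/2$ of $\mathbf{0}$ modulo $\Z^{m}$ (such $p$ exist and can be taken arbitrarily large by Dirichlet's simultaneous approximation theorem, since each $\lambda_{k}$ is irrational). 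Your equidistribution alternative sidesteps the issue entirely, since it works with $\ell\geqslant 1$ and gives a positive proportion of admissible $\ell$; with that (or either patch) the proof is complete.
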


For simplicity, inequalities of the form $|\alpha - \beta| < \varepsilon$, for arbitrarily small $\varepsilon$,
can be written instead as $\alpha \approx \beta$ and omit the explicit dependence on
$\varepsilon$. For example, inequalities \eqref{eq::8} can be represented as
$\ell\lambda_k - q_k \approx \alpha_k$.

In order to use Kronecker's Theorem, we need the following lemma which gives a set of numbers which are linearly independent over rational numbers $\Q$.

\begin{thm}[\cite{R74}]\label{thm::2.5}
Let $p_{1}, p_{2}, \ldots, p_{n}$ be distinct positive primes. The set $\{\sqrt[n]{p_{1}^{m(1)} \cdots p_{k}^{m(k)}}: 0 \leqslant m(i)<n, 1 \leqslant i\leqslant k\}$ is linearly independent over the set of rational numbers $\Q$.
\end{thm}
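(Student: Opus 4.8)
The plan is to recast the statement as a field-degree computation and then prove that degree is exactly $n^k$ by induction on the number of primes. Write $\theta_i:=\sqrt[n]{p_i}$ for the positive real $n$-th root, so the $n^k$ numbers in the theorem are precisely the monomials $\theta_1^{m(1)}\cdots\theta_k^{m(k)}$ with $0\le m(i)<n$. Since $\theta_i^n=p_i\in\Q$, the ring $\Q[\theta_1,\dots,\theta_k]$ is already the field $F_k:=\Q(\theta_1,\dots,\theta_k)$, and it is spanned over $\Q$ by exactly those $n^k$ monomials; hence they are $\Q$-linearly independent if and only if $\dim_\Q F_k=n^k$, i.e. $[F_k:\Q]=n^k$, and since they always span we have $[F_k:\Q]\le n^k$ automatically. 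So it suffices to prove $[F_k:\Q]=n^k$.

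The induction is on $k$, the base case $[\Q(\sqrt[n]{p_1}):\Q]=n$ being Eisenstein's criterion at $p_1$. For the step, the tower law and the induction hypothesis give $[F_k:\Q]=[F_k:F_{k-1}]\cdot n^{k-1}$, so I must show $[F_k:F_{k-1}]=n$, i.e. that $x^n-p_k$ is irreducible over $F_{k-1}$. Suppose not: then the minimal polynomial of $\theta_k$ over $F_{k-1}$ has some degree $d<n$, its roots being $d$ of the numbers $\zeta_n^j\theta_k$ (the roots of $x^n-p_k$ in $\mathbb{C}$), so its constant term equals $\pm\zeta_n^e\theta_k^d$ for an integer $e$; as this lies in $F_{k-1}\subseteq\R$ while $\theta_k^d>0$, we get $\zeta_n^e=\pm1$ and hence $p_k^{d/n}=\theta_k^d\in F_{k-1}$. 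Together with $p_k\in F_{k-1}$ this forces $p_k^{\gcd(d,n)/n}\in F_{k-1}$, and since $\gcd(d,n)$ is a proper divisor of $n$ one may choose a prime $q\mid n/\gcd(d,n)$ and conclude $\sqrt[q]{p_k}\in F_{k-1}$. So the whole theorem reduces to the claim: \emph{for every prime $q\mid n$ one has $\sqrt[q]{p_k}\notin F_{k-1}$.}

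This last claim is the arithmetic core, and I would attack it through ramification at $p$-adic places. Assume $\sqrt[q]{p_k}\in F_{k-1}$, so $\Q(\sqrt[q]{p_k})\subseteq F_{k-1}$; since $x^q-p_k$ is Eisenstein at $p_k$, the subfield $\Q(\sqrt[q]{p_k})$ is totally ramified at $p_k$ with ramification index $q>1$, whence every prime of $F_{k-1}$ above $p_k$ has ramification index divisible by $q$. If $p_k\nmid n$ this is already absurd: the discriminant of $F_{k-1}=\Q(\sqrt[n]{p_1},\dots,\sqrt[n]{p_{k-1}})$ is divisible only by $n$ and by the $p_i$ with $i<k$, all different from $p_k$, so $F_{k-1}$ is unramified at $p_k$ and the ramification index there is $1$. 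Hence the only remaining possibility is $p_k\mid n$, and excluding it is the step I expect to be the main obstacle: now $F_{k-1}$ genuinely can ramify at $p_k$, so the ramification index alone no longer gives a contradiction. Here I would either compare finer ramification data at $p_k$ --- the field $\Q(\sqrt[p_k]{p_k})$ is wildly totally ramified with different exponent $2p_k-1$ at $p_k$, which is too large to be supported by the ``$p_k$-power-root-of-a-one-unit'' local extensions $\Q_{p_k}(\sqrt[n]{p_i})$ that generate $F_{k-1}$ locally --- or first reduce to prime-power exponent (the degree of $\Q(\sqrt[n]{p_1},\dots,\sqrt[n]{p_k})$ over $\Q$ multiplies over the prime-power parts of $n$, as those subfields have pairwise coprime degrees) and then settle the one residual case $\sqrt[\ell]{\ell}\notin\Q(\sqrt[\ell^s]{p_1},\dots,\sqrt[\ell^s]{p_{k-1}})$ using Kummer theory over $\Q(\zeta_{\ell^s})$. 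Once this case is dispatched the induction closes, giving $[F_k:\Q]=n^k$ as required.
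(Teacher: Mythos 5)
The paper itself offers no proof of this theorem --- it is quoted from Richards \cite{R74} --- so the only question is whether your argument stands on its own, and as written it does not. Your reduction is sound: linear independence of the $n^k$ monomials in the $\theta_i=\sqrt[n]{p_i}$ is indeed equivalent to $[F_k:\Q]=n^k$, the induction sets up correctly, and the constant-term/gcd argument legitimately reduces the inductive step to the claim that $\sqrt[q]{p_k}\notin F_{k-1}$ for every prime $q\mid n$. In the case $p_k\nmid n$ your ramification comparison is also complete and correct: $\Q(\sqrt[q]{p_k})$ is totally ramified at $p_k$ by Eisenstein, while $F_{k-1}$ is unramified at $p_k$ because the ramified primes of a compositum lie among those of its factors, and each $\Q(\sqrt[n]{p_i})$, $i<k$, ramifies only at divisors of $np_i$.

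The genuine gap is the case $p_k\mid n$, which you yourself flag as ``the main obstacle'' and for which you only list two candidate strategies without carrying either out. This is not a marginal case: already for $n=2$ with $2$ among the primes (e.g.\ showing $\sqrt{2}\notin\Q(\sqrt{3},\sqrt{5})$) the unramifiedness argument is unavailable, since $F_{k-1}$ does ramify at $2$ through any $\Q(\sqrt{p_i})$ with $p_i\equiv 3\pmod 4$. The sketched repairs each need real work. The different-exponent comparison requires an actual upper bound on the different of the wildly ramified local compositum $\Q_{p_k}(\sqrt[n]{p_1},\dots,\sqrt[n]{p_{k-1}})$, and no such bound is stated or proved; ``too large to be supported by'' is an assertion, not an argument. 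The reduction to prime-power exponent is fine in outline, but the residual statement you must then prove by ``Kummer theory over $\Q(\zeta_{\ell^s})$'' is exactly the hard content, and note that once you adjoin roots of unity the real-constant-term trick you relied on earlier (to force $\zeta_n^e=\pm1$) is no longer available, so that step cannot simply be repeated over the cyclotomic field. As it stands, your proof establishes the theorem only when $n$ is coprime to all of $p_1,\dots,p_k$; the remaining case still needs a complete argument (this is precisely the delicate point that Richards' cited paper is designed to handle).
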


Taking $n=2$,  Theorem \ref{thm::2.5} implies the following result immediately.

\begin{cor}\label{cor::2.6}
The set $\{\sqrt{\Delta}$ , $\Delta$ is a square-free integer $\}$ is linearly independent over the set of rational numbers $\Q$.
\end{cor}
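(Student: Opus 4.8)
The plan is to obtain the corollary as the direct specialization of Theorem~\ref{thm::2.5} to $n = 2$. Recall that a (possibly infinite) set of reals is linearly independent over $\Q$ precisely when each of its finite subsets is. So it is enough to fix an arbitrary finite collection of pairwise distinct square-free integers $\Delta_{1}, \dots, \Delta_{t}$ and to show that $\sqrt{\Delta_{1}}, \dots, \sqrt{\Delta_{t}}$ are linearly independent over $\Q$.

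First I would gather the primes involved: let $p_{1}, \dots, p_{k}$ be the distinct primes dividing at least one $\Delta_{j}$. Because each $\Delta_{j}$ is square-free, it factors as $\Delta_{j} = p_{1}^{m_{j}(1)} \cdots p_{k}^{m_{j}(k)}$ with every exponent $m_{j}(i) \in \{0, 1\}$, so in particular $0 \leqslant m_{j}(i) < 2$. Hence every $\sqrt{\Delta_{j}}$ is one of the numbers $\sqrt{p_{1}^{m(1)} \cdots p_{k}^{m(k)}}$ with $0 \leqslant m(i) < 2$ listed in Theorem~\ref{thm::2.5} for $n = 2$. Moreover, by unique factorization the assignment $\Delta_{j} \mapsto (m_{j}(1), \dots, m_{j}(k))$ is injective, so the distinctness of $\Delta_{1}, \dots, \Delta_{t}$ forces $\sqrt{\Delta_{1}}, \dots, \sqrt{\Delta_{t}}$ to be distinct elements of that set.

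Theorem~\ref{thm::2.5}, applied with $n = 2$ to the primes $p_{1}, \dots, p_{k}$, asserts that the whole set $\{\sqrt{p_{1}^{m(1)} \cdots p_{k}^{m(k)}} : 0 \leqslant m(i) < 2\}$ is linearly independent over $\Q$; since any subset of a linearly independent set is again linearly independent, $\sqrt{\Delta_{1}}, \dots, \sqrt{\Delta_{t}}$ are linearly independent over $\Q$. As the finite collection was arbitrary, $\{\sqrt{\Delta} : \Delta \text{ square-free}\}$ is linearly independent over $\Q$. There is essentially no obstacle here: the only points to check are the routine reduction from the infinite set to finitely many $\Delta_{j}$ (and hence finitely many primes, so that Theorem~\ref{thm::2.5} applies) and the observation that being square-free is exactly the hypothesis making all prime exponents lie in $\{0, 1\} = \{0, \dots, n-1\}$ when $n = 2$.
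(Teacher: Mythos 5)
Your proposal is correct and follows exactly the route the paper intends: the paper simply asserts that the corollary follows "immediately" from Theorem~\ref{thm::2.5} with $n=2$, and your write-up supplies the routine details (reduction to a finite subset, collecting the finitely many primes involved, and noting that square-free means all exponents lie in $\{0,1\}$). No discrepancy with the paper's argument.
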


The following symbols are used throughout the rest of the paper:
\begin{itemize}
	\item $\ket{\allone}_{n}$:  The all-one vectors of dimension $n$;
	\item $\ket{0}_{n}$: The all-zero vectors of dimension $n$;
	\item $J_{m,n}$: The $m \times n$ all-one matrix;
	\item  $J_{n}$: The $n \times n$ all-one matrix;
	\item  $\eket{n}^{i}$: The $n\times 1$ column vector with $i$th entry is one, and all other entries is zero;
	\item $I_{n}$: The identity matrix of dimension $n$.
\end{itemize}

\section{Neighborhood Coronas of Graphs}\label{sec::3}

Let $G$ be a graph with vertex set $V(G)=\{v_1,\ldots, v_n\}$, and $H$ be a graph
with vertex set $V(H)=\{w_1,\ldots, w_m\}$.
The neighborhood corona $G \star H$ has the vertex set
\begin{equation*}
	V(G \star H) = V(G) \times \left( \{0\} \cup V(H) \right),
\end{equation*}
and the adjacency relation
\begin{equation*}
	((v, w), (v', w')) \in E(G \star H)
	\iff
	 \begin{cases}
	 	\text{$w = w' = 0$ and $(v,v') \in E(G)$} & \text{or} \\
	 	\text{$v = v'$ and $(w,w') \in E(H)$}     & \text{or} \\
	 	\text{$w' = 0$ and $v' \in N_{G}(v)$.}       &
	 \end{cases}
\end{equation*}

\begin{figure}[htbp]
	\centering
	\begin{tikzpicture}[scale=0.8]
		\foreach \x in {-2,0,2} {
			\draw (0,0) -- (\x,0);
			\fill (\x,0) circle[radius = 0.1];
			\foreach \y in {0,...,3}{
				\draw (1/2+\x, 2) -- (-1/2+\x, 2);
				\draw (0,0) -- +(-2-1/2 + \y/3, 2);
				\draw (0,0) -- +(2-1/2 + \y/3, 2);
				\draw (2,0) -- +(-2-1/2 + \y/3, 2);
				\draw (-2,0) -- +(2-1/2 + \y/3, 2);
				\filldraw (\x-1/2 + \y/3, 2) circle[radius=0.1];
			}	
		}
	\end{tikzpicture}
	\caption{The neighborhood corona $P_3 \star P_4$.\label{Fig1}}
\end{figure}
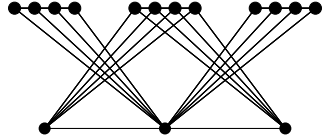

For example, the neighborhood corona $P_3 \star P_4$ is shown in Figure \ref{Fig1}. According to the definition, the adjacency matrix of $G \star H$  is given by
\begin{equation*}
	A(G \star H) =
		\left[\begin{array}{cc}
			A(G)                                   & \tbra{j}_m \otimes A(G) \\
			\ket{j}_m \otimes A(G)^T & A(H) \otimes I_{n}
		\end{array}\right].
\end{equation*}


In \cite[Theorem 2.1]{Go11}, Gopalapillai showed that the spectrum of $G \star H$ is determined by the spectra of $G$ and $H$ under the condition that  $H$ is a $k$-regular graph.

\begin{thm}[Gopalapillai \cite{Go11}]\label{thm::3.1}
Let $G$ be a graph with eigenvalues $\lambda_1 \geqslant \lambda_2 \geqslant \cdots \geqslant \lambda_n$ and $H$ be a $k$-regular graph with eigenvalues $k = \mu_1 \geqslant \mu _2 \geqslant \cdots \geqslant \mu_m$. Let $\ket{x}_{i}$ be a eigenvector of $A(G)$ with an eigenvalue $\lambda_{i}$, $i = 1,2, \ldots , n$, and let $\ket{y}_{j}$ be a eigenvector of $A(H)$ with an eigenvalue $\mu_{j}$, $j = 2,3, \ldots , m$.
Then the eigenvalue and the corresponding eigenvector of $G \star H$ consists of:
\begin{enumerate}[(i)]	
	\item $\lambda_{i}^\pm =\frac{1}{2}\left(\lambda_i + k \pm \sqrt{(\lambda_i - k)^2 + 4m \lambda_i^2}\right)$ for each $i = 1,2, \ldots , n$, and the corresponding eigenvector is
\begin{equation*}
  \twovector{\ket{0}_n}{\ket{\allone}_{m} \otimes \ket{x}_{i}}_{\lambda_{i}^{+}=k}, \twovector{\ket{x}_{i}}{\ket{0}_{m} \otimes \ket{x}_{i}}_{\lambda_{i}^{-}=0},
\end{equation*}
 for $\lambda_{i} = 0$, and
\begin{equation*}
  \twovector{\frac{\lambda_{i}^\pm-k}{\lambda_{i}} \ket{x}_{i}}{\ket{\allone}_{m} \otimes \ket{x}_{i}},
\end{equation*}
 for $\lambda_{i} \neq 0$.

\item $\mu_j$, with multiplicity n, for each $j = 2,3, \ldots , m$, and the corresponding eigenvector is
\begin{equation*}
  \twovector{\ket{0}_{n}}{\ket{y}_{j} \otimes \eket{n}^{i}}.
\end{equation*}

\end{enumerate}
\end{thm}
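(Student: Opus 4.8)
The plan is to write down an explicit basis of eigenvectors of $A(G\star H)$ in terms of the spectral data of $G$ and $H$, verify each family by multiplying it against the block matrix
\[
A(G\star H)=\left[\begin{array}{cc} A(G) & \tbra{j}_{m}\otimes A(G)\\ \ket{j}_{m}\otimes A(G)^{T} & A(H)\otimes I_{n}\end{array}\right],
\]
and finally count dimensions to see that nothing has been missed. The recurring tools are the mixed-product rule $(P\otimes Q)(R\otimes S)=(PR)\otimes(QS)$ for Kronecker products, the symmetry $A(G)^{T}=A(G)$, and the two facts supplied by $k$-regularity of $H$, namely $A(H)\ket{j}_{m}=k\ket{j}_{m}$ and $\tbra{j}_{m}\ket{j}_{m}=m$.

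\emph{Eigenvectors coming from $H$.} Since $H$ is $k$-regular, the eigenvectors $\ket{y}_{j}$ with $j=2,\ldots,m$ may be taken orthogonal to $\ket{j}_{m}$, so that $\tbra{j}_{m}\ket{y}_{j}=0$. For each such $j$ and each $i\in\{1,\ldots,n\}$ I would apply $A(G\star H)$ to $\twovector{\ket{0}_{n}}{\ket{y}_{j}\otimes\eket{n}^{i}}$: the top block equals $(\tbra{j}_{m}\ket{y}_{j})\otimes(A(G)\eket{n}^{i})=0$, while the bottom block equals $(A(H)\ket{y}_{j})\otimes\eket{n}^{i}=\mu_{j}\,\ket{y}_{j}\otimes\eket{n}^{i}$. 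Hence each of these $n(m-1)$ vectors is an eigenvector for $\mu_{j}$, and they are linearly independent because the $\ket{y}_{j}$ are; this accounts for the eigenvalues $\mu_{2},\ldots,\mu_{m}$, each with multiplicity $n$.

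\emph{Eigenvectors coming from $G$.} Fix an eigenvector $\ket{x}_{i}$ of $A(G)$ with $A(G)\ket{x}_{i}=\lambda_{i}\ket{x}_{i}$, and consider the two-dimensional space $P_{i}$ spanned by $b_{1}=\twovector{\ket{x}_{i}}{\ket{0}_{m}\otimes\ket{x}_{i}}$ and $b_{2}=\twovector{\ket{0}_{n}}{\ket{j}_{m}\otimes\ket{x}_{i}}$. Using the identities above one finds $A(G\star H)b_{1}=\lambda_{i}(b_{1}+b_{2})$ and $A(G\star H)b_{2}=m\lambda_{i}\,b_{1}+k\,b_{2}$, so $P_{i}$ is invariant and $A(G\star H)$ acts on it, in the basis $(b_{1},b_{2})$, through $\left[\begin{array}{cc}\lambda_{i}&m\lambda_{i}\\ \lambda_{i}&k\end{array}\right]$. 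The eigenvalues of this $2\times 2$ matrix are the roots of $\theta^{2}-(\lambda_{i}+k)\theta+(\lambda_{i}k-m\lambda_{i}^{2})$, i.e.\ exactly $\lambda_{i}^{\pm}=\tfrac{1}{2}\big(\lambda_{i}+k\pm\sqrt{(\lambda_{i}-k)^{2}+4m\lambda_{i}^{2}}\big)$. When $\lambda_{i}\neq 0$ the discriminant $(\lambda_{i}-k)^{2}+4m\lambda_{i}^{2}$ is positive, the two eigenvalues are distinct, and the eigenvector for $\lambda_{i}^{\pm}$ rescales to $\twovector{\frac{\lambda_{i}^{\pm}-k}{\lambda_{i}}\ket{x}_{i}}{\ket{j}_{m}\otimes\ket{x}_{i}}$. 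When $\lambda_{i}=0$ the matrix degenerates to $\left[\begin{array}{cc}0&0\\ 0&k\end{array}\right]$, so $b_{1}$ is an eigenvector for $\lambda_{i}^{-}=0$ and $b_{2}$ for $\lambda_{i}^{+}=k$, which is precisely the exceptional formula in part (i); no separate argument is required.

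\emph{Counting and completeness.} Part (i) produces $2n$ eigenvectors (two for each $i$) and part (ii) produces $n(m-1)$, for a total of $n(m+1)=|V(G\star H)|$. They are linearly independent: in the $H$-coordinates the part-(i) vectors lie in $\ket{j}_{m}\otimes\R^{n}$ while the part-(ii) vectors lie in its orthogonal complement (because $\ket{y}_{j}\perp\ket{j}_{m}$), and within each part independence follows from the mutual orthogonality of the $\ket{x}_{i}$, respectively of the $\ket{y}_{j}$, together with the fact that for $\lambda_i\neq 0$ the two vectors attached to $\ket{x}_i$ have distinct first-block scalars. Since $A(G\star H)$ is symmetric, $n(m+1)$ independent eigenvectors form a full eigenbasis, so the list above gives the whole spectrum with correct multiplicities. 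I expect the only real subtlety to be exactly this accounting --- checking that the invariant planes $P_{i}$ and the $H$-part together exhaust the space without overlap, and that the argument survives the degeneracies: a vanishing $\lambda_{i}$, a coincidence between an eigenvalue of $G$ and one of $H$, or a disconnected $H$ (where one must note that a basis of the $k$-eigenspace of $A(H)$ can be chosen with all but one vector orthogonal to $\ket{j}_{m}$). The individual block computations themselves are routine.
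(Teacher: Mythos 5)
Your proposal is correct: the block computations, the two-dimensional invariant subspaces spanned by $b_{1}$ and $b_{2}$, the $2\times 2$ quotient matrix whose characteristic polynomial yields $\lambda_{i}^{\pm}$, and the dimension count $2n+n(m-1)=n(m+1)$ all check out, and you rightly flag the only delicate points (the case $\lambda_{i}=0$, choosing the $k$-eigenvectors of a disconnected $H$ orthogonal to $\ket{j}_{m}$, and independence/orthogonality between the two families). Note that the paper itself gives no proof of Theorem \ref{thm::3.1} --- it is quoted from Gopalapillai \cite{Go11} --- so there is no in-paper argument to compare against; your direct verification is essentially the standard one and is complete as written.
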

We now construct the eigenprojectors of $G \star H$ by the eigenprojectors of $G$ and $H$, which are essential in the subsequent analyses.

\begin{prop} \label{prop::3.2}
Let $G$ be a connected graph on $n$ vertices and $H$ be a connected $k$-regular  graph on $m$ vertices. Suppose that $\Sp(G)=\{\lambda_1, \lambda_2, \cdots, \lambda_p\}$ and $\Sp(H)=\{\mu_1, \mu_2, \cdots, \mu_q\}$ are the sets of  distinct eigenvalues of $G$ and $H$, respectively, where $\lambda_1>\lambda_2>\cdots> \lambda_p$ and $k=\mu_1>\mu_2>\cdots> \mu_q$.
\begin{enumerate}[(i)]
\item \label{item::prop::3.2::1}
For each eigenvalue $\mu_{j}$ of $H$, $j = 2,3, \ldots , q$, the  eigenprojector of $G \star H$ with respect to $\mu_j$ is
\begin{equation*}
	\eigenA{\mu_{j}}(G \star H)
	:=
	\left[\begin{array}{cc}
		0           & \ket{0}_{m}^T                                   \\
		\ket{0}_{m} & \eigenA{\mu_{j}}{H}
	\end{array}\right] \otimes I_{n}.
\end{equation*}
\item \label{item::prop::3.2::2}
For each eigenvalue $\lambda_{i}$ of $G$,  $i = 1,2, \ldots , p$, let
\begin{equation*}
	\lambda^\pm_{i} := \frac{\lambda_{i} + k \pm \sqrt{(\lambda_{i} - k)^{2} + 4m \lambda_{i}^2}}{2}.
\end{equation*}
The  eigenprojector of $G \star H$ with respect to $\lambda^\pm_{i}$ is
\begin{equation*}
	\eigenA{k}(G \star H)
	:= \frac{1}{m}
	\left[\begin{array}{cc}
		0           & \ket{0}_{m}^T \\
		\ket{0}_{m} & J_{m}
	\end{array}\right]\otimes \eigenA{0}{G},
\end{equation*}
\begin{equation*}
\eigenA{0}(G \star H)
	:= \left[\begin{array}{cc}
		1           & \ket{0}_{m}^T \\
		\ket{0}_{m} & \ket{0}_{m\times m}
	\end{array}\right]\otimes \eigenA{0}{G},
\end{equation*}
if  $\lambda_{i} = 0$,
and
\begin{equation*}
	\eigenA{\lambda^\pm_{i}}(G \star H)
	:= \frac{1}{(\lambda^\pm_{i} - k)^2 +m\lambda_{i}^2}
	\left[\begin{array}{cc}
		(\lambda^\pm_{i} - k)^2                   & \lambda_{i}(\lambda^\pm_{i} - k)\tbra{j}_m \\
		\lambda_{i}(\lambda^\pm_{i} - k)\ket{j}_m & \lambda_{i}^2 J_{m}
	\end{array}\right] \otimes \eigenA{\lambda_{i}}(G),
\end{equation*}
if  $\lambda_{i} \neq 0$.

\item \label{item::prop::3.2::3}
The spectral decomposition of  $G \star H$ is given by
\begin{equation*}
	A(G \star H) =
	\sum_{\lambda_{i} \in \Sp(G)} \sum_\pm \lambda^\pm_{i} \eigenA{\lambda^\pm_{i}}(G \star H)
	 + \sum_{\mu_{j} \in \Sp(H)\setminus\{k\}} \mu_{j} \eigenA{\mu_{j}}(G \star H).
\end{equation*}
\end{enumerate}

\begin{proof}
For (\ref{item::prop::3.2::1}), let  $B_{\mu_{j}}$  be an orthonormal basis of the $\mu_j$-eigenspace of $H$.  Then  the eigenprojector of $\mu_{j}$ in $H$ is
\begin{equation*}
	\eigenA{\mu_{j}}{H} =
		\sum_{\ket{y}_{j} \in B_{\mu_{j}}} \ket{y}_{j}\ket{y}_{j}^T.
\end{equation*}	
By Theorem \ref{thm::3.1}, an orthonormal basis of the $\mu_{j}$-eigenspace of $G \star H$
is given by
\begin{equation*}
\left\{ \ket{Y}_{j}^{i}=\twovector{\ket{0}_{n}}{\ket{y}_{j} \otimes \eket{n}^{i}}: \ket{y}_{j} \in B_{\mu_{j}}, i = 1,2, \ldots , n\right\}.
\end{equation*}
Thus the eigenprojector  of $G\star H$  with respect to $\mu_{j}$ is
\begin{equation*}
\begin{aligned}
\eigenA{\mu_{j}}(G \star H) = \sum_{\ket{y}_{j} \in B_{\mu_{j}}} \sum_{i=1}^{n} \ket{Y}_{j}^{i} {\ket{Y}_{j}^{i}}^{T} =
	\left[\begin{array}{cc}
	0           & \ket{0}_{m}^T                                   \\
	\ket{0}_{m} & \eigenA{\mu_{j}}{H}
\end{array}\right] \otimes I_{n}.
\end{aligned}
\end{equation*}

For (\ref{item::prop::3.2::2}), let $B_{\lambda_{i}}$ be an orthonormal basis of the  $\lambda_{i}$-eigenspace of $G$. Then the eigenprojector of $\lambda_{i}$ in $G$ is
\begin{equation*}
	\eigenA{\lambda_{i}}{G} = \sum_{\ket{x}_{i} \in B_{\lambda_{i}}} \ket{x}_{i}\ket{x}_{i}^T.
\end{equation*}
Recall that $\lambda_{i}^\pm = \frac{1}{2}\left(\lambda_{i} + k \pm \sqrt{(\lambda_{i} - k)^2 + 4m\lambda_{i}^2}\right)$. If $\lambda_{i} = 0$, then $\lambda_{i}^+ = k$ and $\lambda_{i}^- = 0$.  By  Theorem \ref{thm::3.1}, an orthonormal basis of the $k$-eigenspace of $G \star H$
is given by
\[\left\{\ket{X}_{k} = \frac{1}{\sqrt{m}}\twovector{\ket{0}_n}{\ket{\allone}_{m} \otimes \ket{x}_{i}}:\ket{x}_{i}\in B_{0}\right\};
\]
an orthonormal basis of the $0$-eigenspace of $G \star H$
is given by
\[\left\{\ket{X}_{0} =\twovector{\ket{x}_{i}}{\ket{0}_{m} \otimes \ket{x}_{i}}:\ket{x}_{i}\in B_{0}\right\}.
\]
Therefore, if $\lambda_{i} = 0$, then the eigenprojector  of $G\star H$  with respect to $k$ is
\begin{equation*}
\begin{aligned}
\eigenA{k}(G \star H) &= \sum_{\ket{x}_{i} \in B_{0}} \ket{X}_{k}{\ket{X}_{k}}^T =
 \frac{1}{m}
\left[\begin{array}{cc}
	0           & \ket{0}_{m}^T \\
	\ket{0}_{m} & J_{m}
\end{array}\right]\otimes \eigenA{0}{G};
\end{aligned}
\end{equation*}

the eigenprojector  of $G\star H$  with respect to $0$ is
\begin{equation*}
\begin{aligned}
\eigenA{0}(G \star H) &= \sum_{\ket{x}_{i} \in B_{0}} \ket{X}_{0}{\ket{X}_{0}}^T =
 \left[\begin{array}{cc}
		1           & \ket{0}_{m}^T \\
		\ket{0}_{m} & \ket{0}_{m\times m}
	\end{array}\right]\otimes \eigenA{0}{G}.
\end{aligned}
\end{equation*}

If $\lambda_{i} \neq 0$, then $\lambda_{i}^+ \neq k$, $\lambda_{i}^- \neq 0$.  By  Theorem \ref{thm::3.1}, an orthonormal basis of the $\lambda_{i}^\pm$-eigenspace of $G \star H$
is given by
$$\left\{\ket{X}_{i}^{\pm} =\frac{1}{\sqrt{(\frac{\lambda^\pm_{i}-k}{\lambda_{i}})^2+m}}\twovector{\frac{\lambda^\pm_{i}-k}{\lambda_{i}} \ket{x}_{i}}{\ket{\allone}_{m} \otimes \ket{x}_{i}}:\ket{x}_{i}\in B_{\lambda_{i}}\right\}.$$
Therefore, if $\lambda_{i} \neq 0$, then the eigenprojector  of $G\star H$  with respect to $\lambda^\pm_{i}$ is
\begin{equation*}
\begin{aligned}
\eigenA{\lambda^\pm_{i}}(G \star H) &= \sum_{\ket{x}_{i} \in B_{\lambda_{i}}} \ket{X}_{i}^{\pm}{\ket{X}_{i}^{\pm}}^T =
\frac{1}{(\lambda^\pm_{i} - k)^2 +m\lambda_{i}^2}
\left[\begin{array}{cc}
	(\lambda^\pm_{i} - k)^2                   & \lambda_{i}(\lambda^\pm_{i} - k)\tbra{j}_m \\
	\lambda_{i}(\lambda^\pm_{i} - k)\ket{j}_m & \lambda_{i}^2 J_{m}
\end{array}\right] \otimes \eigenA{\lambda_{i}}(G).
\end{aligned}
\end{equation*}
This proves (\ref{item::prop::3.2::2}). Furthermore, from (\ref{item::prop::3.2::1}), (\ref{item::prop::3.2::2}) and Theorem \ref{thm::3.1}, we obtain (\ref{item::prop::3.2::3}) immediately.
\end{proof}
\end{prop}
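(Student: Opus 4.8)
The plan is to produce each eigenprojector of $G\star H$ directly as $\sum_{\ket{v}}\ket{v}\ket{v}^{T}$ over an orthonormal basis $\{\ket{v}\}$ of the relevant eigenspace, taking for $\{\ket{v}\}$ the eigenvectors furnished by Theorem~\ref{thm::3.1} after normalization, and then to read off the spectral decomposition in~(\ref{item::prop::3.2::3}) from completeness. The only facts needed are: for a graph $K$ with a distinct eigenvalue $\theta$ and an orthonormal basis $B_{\theta}$ of its $\theta$-eigenspace, $\eigenA{\theta}{K}=\sum_{\ket{x}\in B_{\theta}}\ket{x}\ket{x}^{T}$; and the Kronecker mixed-product rule $(\ket{a}\otimes\ket{b})(\ket{c}\otimes\ket{d})^{T}=\ket{a}\ket{c}^{T}\otimes\ket{b}\ket{d}^{T}$, with a plain vector read as scalar$\,\otimes\,$vector when one Kronecker factor is absent (so, e.g., $\ket{x}(\ket{j}_{m}\otimes\ket{x})^{T}=\tbra{j}_{m}\otimes\ket{x}\ket{x}^{T}$).

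For~(\ref{item::prop::3.2::1}) I would fix $j\in\{2,\dots,q\}$ and an orthonormal basis $B_{\mu_{j}}$ of the $\mu_{j}$-eigenspace of $H$. By Theorem~\ref{thm::3.1}(ii) the vectors $\twovector{\ket{0}_{n}}{\ket{y}\otimes\eket{n}^{i}}$ with $\ket{y}\in B_{\mu_{j}}$ and $1\le i\le n$ are $\mu_{j}$-eigenvectors of $G\star H$; they are orthonormal (the $\ket{y}$'s are and the $\eket{n}^{i}$'s are), and their number matches the multiplicity of $\mu_{j}$ recorded in Theorem~\ref{thm::3.1}(ii), so they form an orthonormal basis of that eigenspace. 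Each associated rank-one matrix is supported on the lower-right $mn\times mn$ block, where it equals $\ket{y}\ket{y}^{T}\otimes\eket{n}^{i}(\eket{n}^{i})^{T}$; summing over $i$ first, using $\sum_{i}\eket{n}^{i}(\eket{n}^{i})^{T}=I_{n}$, and then over $\ket{y}$ yields $\eigenA{\mu_{j}}{H}\otimes I_{n}$ in that block, which is the claimed form.

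For~(\ref{item::prop::3.2::2}) I would follow the case split of Theorem~\ref{thm::3.1}. If $\lambda_{i}=0$, then $\lambda^{+}_{i}=k$ and $\lambda^{-}_{i}=0$, and the two eigenvector families of Theorem~\ref{thm::3.1}(i) become unit vectors after dividing the first by $\sqrt{m}$ (since $\|\ket{j}_{m}\otimes\ket{x}\|^{2}=m$) and leaving the second as is; summing their outer products over an orthonormal basis $B_{0}$ of the $0$-eigenspace of $G$ produces the two displayed matrices $\eigenA{k}(G\star H)$ and $\eigenA{0}(G\star H)$. If $\lambda_{i}\neq0$, the eigenvector $\twovector{\frac{\lambda^{\pm}_{i}-k}{\lambda_{i}}\ket{x}}{\ket{j}_{m}\otimes\ket{x}}$ has squared norm $\bigl(\tfrac{\lambda^{\pm}_{i}-k}{\lambda_{i}}\bigr)^{2}+m$; forming its normalized outer product, clearing a factor $\lambda_{i}^{2}$ from numerator and denominator, and summing over an orthonormal basis $B_{\lambda_{i}}$ of the $\lambda_{i}$-eigenspace of $G$ --- so that $\sum\ket{x}\ket{x}^{T}=\eigenA{\lambda_{i}}{G}$, $\sum\ket{x}(\ket{j}_{m}\otimes\ket{x})^{T}=\tbra{j}_{m}\otimes\eigenA{\lambda_{i}}{G}$ and $\sum(\ket{j}_{m}\otimes\ket{x})(\ket{j}_{m}\otimes\ket{x})^{T}=J_{m}\otimes\eigenA{\lambda_{i}}{G}$ --- reproduces exactly the claimed $2\times2$ block matrix tensored with $\eigenA{\lambda_{i}}{G}$. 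Then~(\ref{item::prop::3.2::3}) is immediate: the normalized eigenvectors of Theorem~\ref{thm::3.1} are pairwise orthogonal and number $2n+n(m-1)=n(m+1)=|V(G\star H)|$, so they form an orthonormal basis of $\R^{n(m+1)}$, and $A(G\star H)$ equals the sum of each eigenvalue listed in~(\ref{item::prop::3.2::1}) and~(\ref{item::prop::3.2::2}) times its eigenprojector.

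I do not expect a genuine obstacle: this is a normalization-and-bookkeeping proof rather than a conceptual one. The points that do need care are getting the normalizing constants right; keeping the order of the Kronecker factors consistent with the block layout of $A(G\star H)$, so that $\ket{j}_{m}\otimes\ket{x}$ and not $\ket{x}\otimes\ket{j}_{m}$ appears and the lower-right $mn\times mn$ block is read as indexed by $V(H)\times V(G)$ with the $V(H)$-index outermost; and the minor subtlety that when two of the listed eigenvalues of $G\star H$ coincide --- for instance two of the matrices in~(\ref{item::prop::3.2::2}) are attached to the eigenvalue $0$ whenever both $0$ and $k/m$ lie in $\Sp(G)$, since $\lambda^{-}_{i}=0$ precisely when $\lambda_{i}\in\{0,k/m\}$ --- the true eigenprojector for that value is the sum of the corresponding matrices, which leaves the resolution of the identity, and hence the decomposition in~(\ref{item::prop::3.2::3}), unaffected.
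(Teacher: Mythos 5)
Your proposal is correct and follows essentially the same route as the paper: take the eigenvectors supplied by Theorem \ref{thm::3.1}, normalize them, sum the rank-one outer products over an orthonormal basis of each eigenspace of $G$ or $H$ using the Kronecker mixed-product rule, and read off (\ref{item::prop::3.2::3}) from completeness. Your remark about possible coincidences among the listed eigenvalues (e.g.\ $\lambda_i^-=0$ when $\lambda_i\in\{0,k/m\}$), where the true eigenprojector is the sum of the corresponding matrices, is a careful point the paper passes over silently, but it does not change the argument.
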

In what follows, we apply Proposition \ref{prop::3.2} to analyze the transition matrix
$\exp(-\ii tA(G \star H))$, and thereby discuss the state transfer in neighborhood coronas.

\begin{prop} \label{prop::3.3}
Let $G$ be a connected graph on $n$ vertices and $H$ be a connected $k$-regular graph on $m$ vertices.
\begin{enumerate}[(i)]
  \item \label{item::prop::3.3::1}
  For vertices $v$ and $v'$ of $G$, we have
\begin{equation*}
	\ebra{(v,0)} e^{-\ii tA(G \star H)} \eket{(v',0)}
	= \sum_{\lambda \in \Sp(G)} e^{-\ii t(\lambda + k)/2}
		\ebra{v}\eigenA{\lambda}{G}\eket{v'}
		\left( \cos\left(t\Lambda_\lambda/2\right)
			- \frac{\lambda - k}{\Lambda_\lambda} \ii \sin\left(t\Lambda_\lambda/2 \right) \right),
\end{equation*}
where $\Lambda_\lambda = \sqrt{(\lambda - k)^2 + 4m\lambda^2}$, and $\Sp(G)$ is the set of all distinct eigenvalues of $G$.
  \item \label{item::prop::3.3::2}
  For vertices $v$ and $v'$ of $G$, and vertex $w$ of $H$, we have
\begin{equation*}
	\ebra{(v',0)} e^{-\ii tA(G \star H)} \eket{(v,w)}
	= \sum_{\lambda \in \Sp(G)} e^{-\ii t(\lambda + k)/2}
		\ebra{v}\eigenA{\lambda}{G}\eket{v'}
		 \frac{-2\lambda}{\Lambda_\lambda} \ii \sin\left(t\Lambda_\lambda/2 \right),
\end{equation*}
where $\Lambda_\lambda = \sqrt{(\lambda - k)^2 + 4m\lambda^2}$, and $\Sp(G)$ is the set of all distinct eigenvalues of $G$.
\end{enumerate}

\end{prop}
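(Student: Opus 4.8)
The plan is to substitute the spectral decomposition of Proposition~\ref{prop::3.2}(\ref{item::prop::3.2::3}) into $U_{A(G \star H)}(t)=\sum_{\theta}e^{-\ii t\theta}\eigenA{\theta}(G \star H)$ and read off the requested entries block by block, using the explicit form of the eigenprojectors. For (\ref{item::prop::3.3::1}) I would first note that $\eket{(v,0)}$ and $\eket{(v',0)}$ lie in the block of $\R^{V(G \star H)}$ spanned by $\{\eket{(u,0)}:u\in V(G)\}$, so $\ebra{(v,0)}\eigenA{\theta}(G \star H)\eket{(v',0)}$ is simply the $(v,v')$-entry of the top-left $n\times n$ block of $\eigenA{\theta}(G \star H)$. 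Reading off Proposition~\ref{prop::3.2}(\ref{item::prop::3.2::1})--(\ref{item::prop::3.2::2}): the projectors $\eigenA{\mu_{j}}(G \star H)$ and $\eigenA{k}(G \star H)$ have zero top-left block and drop out; $\eigenA{0}(G \star H)$ contributes $\ebra{v}\eigenA{0}{G}\eket{v'}$ (time factor $e^{0}=1$); and for $\lambda\in\Sp(G)$ with $\lambda\ne0$ the pair $\eigenA{\lambda^{\pm}}(G \star H)$ contributes $\frac{(\lambda^{\pm}-k)^{2}}{(\lambda^{\pm}-k)^{2}+m\lambda^{2}}\,\ebra{v}\eigenA{\lambda}{G}\eket{v'}$ each. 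Thus the whole sum factors as $\sum_{\lambda\in\Sp(G)}\ebra{v}\eigenA{\lambda}{G}\eket{v'}\big(\sum_{\pm}e^{-\ii t\lambda^{\pm}}c^{\pm}_{\lambda}\big)$, with $c^{\pm}_{\lambda}$ the scalar coefficient just displayed.

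The core of the argument is then a scalar identity. Writing $\Lambda=\Lambda_{\lambda}$ and using $\Lambda^{2}=(\lambda-k)^{2}+4m\lambda^{2}$ together with $\lambda^{\pm}-k=\tfrac12(\lambda-k\pm\Lambda)$, a short computation should give $(\lambda^{\pm}-k)^{2}+m\lambda^{2}=\tfrac12\Lambda(\Lambda\pm(\lambda-k))$; simplifying the numerator via $(\lambda^{\pm}-k)^{2}=\tfrac14(\Lambda\pm(\lambda-k))^{2}$ then yields $c^{\pm}_{\lambda}=\tfrac12\pm\frac{\lambda-k}{2\Lambda}$, and likewise the ratio $\frac{\lambda(\lambda^{\pm}-k)}{(\lambda^{\pm}-k)^{2}+m\lambda^{2}}=\pm\frac{\lambda}{\Lambda}$ (needed for part (\ref{item::prop::3.3::2})). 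Since $\lambda^{\pm}=\tfrac12(\lambda+k)\pm\tfrac12\Lambda$ gives $e^{-\ii t\lambda^{\pm}}=e^{-\ii t(\lambda+k)/2}e^{\mp\ii t\Lambda/2}$, the elementary identity $e^{-\ii x}(\tfrac12+c)+e^{\ii x}(\tfrac12-c)=\cos x-2c\,\ii\sin x$ with $x=t\Lambda/2$ and $c=\frac{\lambda-k}{2\Lambda}$ turns $\sum_{\pm}e^{-\ii t\lambda^{\pm}}c^{\pm}_{\lambda}$ into $e^{-\ii t(\lambda+k)/2}\big(\cos(t\Lambda/2)-\frac{\lambda-k}{\Lambda}\ii\sin(t\Lambda/2)\big)$, which is exactly the claimed expression. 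A sanity check at $\lambda=0$ (where $\Lambda_{0}=k$) collapses the bracket to $1$, recovering the $\eigenA{0}{G}$ term, so that this is just the $\lambda=0$ instance of the stated formula and no separate case need be written.

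Part (\ref{item::prop::3.3::2}) I would handle identically, now with $\ebra{(v',0)}$ in the $G$-block and $\eket{(v,w)}$ ($w\in V(H)$) in the complementary block, so that the relevant quantity is an entry of the top-right $n\times mn$ block of each projector. Only $\eigenA{\lambda^{\pm}}(G \star H)$ with $\lambda\ne0$ has a nonzero top-right block; there the Kronecker factor $\eigenA{\lambda}{G}$ supplies $\ebra{v}\eigenA{\lambda}{G}\eket{v'}$ independently of $w$, with scalar coefficient $\pm\frac{\lambda}{\Lambda}$ from the identity above, so the paired terms combine to $\frac{\lambda}{\Lambda}e^{-\ii t(\lambda+k)/2}(e^{-\ii t\Lambda/2}-e^{\ii t\Lambda/2})=e^{-\ii t(\lambda+k)/2}\frac{-2\lambda}{\Lambda}\ii\sin(t\Lambda/2)$, and summing over $\lambda\in\Sp(G)$ gives the stated formula (the $\lambda=0$ contribution vanishing automatically).

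I expect the only delicate point to be that scalar simplification and keeping the two signs consistent --- in particular, for the $\lambda^{-}$ branch one must cancel through $(\lambda^{-}-k)^{2}=\tfrac14(\Lambda-(\lambda-k))^{2}$ rather than naively, since $\lambda^{-}-k$ can be negative, so that the coefficients emerge as $c^{-}_{\lambda}=\tfrac12-\frac{\lambda-k}{2\Lambda}$ and $-\frac{\lambda}{\Lambda}$ with the correct signs. Everything else is bookkeeping of the block and Kronecker structure of the eigenprojectors from Proposition~\ref{prop::3.2}.
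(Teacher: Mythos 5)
Your proposal is correct and follows essentially the same route as the paper: substitute the spectral decomposition of Proposition \ref{prop::3.2} into the transition matrix, read off the relevant block entries of the eigenprojectors, combine each $\lambda^{\pm}$ pair into the cosine/sine expression, and absorb the $\lambda=0$ case into the general formula at the end. The only cosmetic difference is that you simplify the coefficients via the factorization $(\lambda^{\pm}-k)^{2}+m\lambda^{2}=\tfrac{1}{2}\Lambda_{\lambda}\bigl(\Lambda_{\lambda}\pm(\lambda-k)\bigr)$, whereas the paper uses the product identities $\prod_{\pm}\bigl((\lambda^{\pm}-k)^{2}+m\lambda^{2}\bigr)=m\lambda^{2}\Lambda_{\lambda}^{2}$ and $\prod_{\pm}(\lambda^{\pm}-k)=-m\lambda^{2}$; both yield the same trigonometric identities.
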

\begin{proof}
For (\ref{item::prop::3.3::1}) Notice that $\lambda^\pm = \frac{1}{2}(\lambda+k \pm \Lambda_{\lambda})$ for each $\lambda\in \Sp(G)$.
By Proposition \ref{prop::3.2}, we obtain
\begin{equation}
\begin{aligned}\label{eq::30}
	\ebra{(v,w)} e^{-\ii tA(G \star H)} \eket{(v',w')} & =
        \sum_{\lambda \in \Sp(G)}
        e^{-\ii t\lambda^\pm}
		\left(\ebra{(v,0)}\eigenA{\lambda^\pm}{G \star H}\eket{(v',0)}\right) \\
	                                                   & \ + \		
		\sum_{\mu \in \Sp(H)\setminus\{k\}} e^{-\ii t\mu}
		\left(\ebra{(v,w)}\eigenA{\mu}{G \star H}\eket{(v',w')}\right).
\end{aligned}
\end{equation}
Thus
\begin{equation} \label{eq::31}
\begin{aligned}
	\ebra{(v,0)} e^{-\ii tA(G\star H)} \eket{(v',0)}
	 &= \sum_{\lambda \in \Sp(G)\setminus\{0\}}
		e^{-\ii t(\lambda + k)/2}
		\ebra{v} \eigenA{\lambda}{G} \eket{v'}
			\sum_\pm e^{\mp \ii t\Lambda_{\lambda}/2}
				\frac{(\lambda^\pm - k)^2}{(\lambda^\pm - k)^2 + m \lambda^2}\\
     & \ + \
     \sum_{\lambda =0}e^{-\ii tk/2}\ebra{v} \eigenA{0}{G} \eket{v'}e^{\ii tk/2}.
\end{aligned}
\end{equation}
It is easy to verify that
\begin{equation}\label{eq::32}
\prod_{\pm} \bigl((\lambda^\pm - k)^{2} + m\lambda^2\bigr) = m \lambda^2 \Lambda_{\lambda}^{2} \mbox{ and }
\prod_{\pm} (\lambda^\pm - k) = -m \lambda^2,
\end{equation}
which leads to
\begin{equation}\label{eq::34}
\sum_{\pm} e^{\mp \ii t\Lambda_{\lambda}/2}
	\frac{(\lambda^\pm-k)^{2}}{(\lambda^\pm-k)^{2}+m \lambda^2}
	\ = \
	\cos(t\Lambda_{\lambda}/2) - \frac{\lambda - k}{\Lambda_{\lambda}} \ii\sin(t\Lambda_{\lambda}/2).
\end{equation}
By substituting \eqref{eq::34} into \eqref{eq::31}, Merging $\lambda =0$ and $\lambda \in \Sp(G)\setminus\{0\}$ into the result, we obtain the required result.

For (\ref{item::prop::3.3::2}) By Proposition \ref{prop::3.2} and \eqref{eq::30} we obtain

\begin{equation}\label{eq::28}
	\ebra{(v',0)} e^{-\ii tA(G \star H)} \eket{(v,w)}
	= \sum_{\lambda \in \Sp(G)} e^{-\ii t(\lambda + k)/2}
		\ebra{v}\eigenA{\lambda}{G}\eket{v'}
        \sum_\pm e^{\mp \ii t\Lambda_{\lambda}/2}
		 \frac{\lambda(\lambda^\pm - k)}{(\lambda^\pm - k)^2 + m \lambda^2}.
\end{equation}
By \eqref{eq::32}, we obtain
\begin{equation}\label{eq::29}
\sum_\pm e^{\mp \ii t\Lambda_{\lambda}/2}
		 \frac{\lambda(\lambda^\pm - k)}{(\lambda^\pm - k)^2 + m \lambda^2}=\frac{-2\lambda}{\Lambda_\lambda} \ii \sin\left(t\Lambda_\lambda/2 \right).
\end{equation}
By substituting \eqref{eq::29} into \eqref{eq::28}, we obtain the required result.
\end{proof}


\section{Perfect State Transfer}\label{sec::4}

According to Lemma \ref{lem::2.2}, we know that the periodicity is a necessary condition
for the existence of perfect state transfer, that is, if a graph is not periodic at any vertex, then it admits no perfect state transfer. In the section, we study the periodicity of neighborhood coronas, and show that there is no perfect state transfer in many kinds of neighborhood coronas.
\begin{lem} \label{lem::4.1}
Let $G$ be a graph and $H$ be a regular graph.
If $(v,w)$ is periodic in $G \star H$, then $(v,0)$ is periodic in $G \star H$.
\begin{proof}
By Proposition \ref{prop::3.2}, we see that the eigenvalue support of $(v,0)$ is contained in the eigenvalue support of $(v,w)$. Thus the results follows.
\end{proof}
\end{lem}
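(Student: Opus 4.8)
The proof will run through Godsil's periodicity criterion, Theorem~\ref{lem::2.3}: a vertex is periodic in a graph precisely when its eigenvalue support consists entirely of integers, or entirely of numbers $\tfrac12(a+b_\lambda\sqrt\Delta)$ for one fixed square-free integer $\Delta$ and one fixed integer $a$; both of these conditions are inherited by any subset of the support. So the plan is to read $\Phi_{(v,0)}$ and $\Phi_{(v,w)}$ off the eigenprojectors of Proposition~\ref{prop::3.2}, to show that $\Phi_{(v,0)}$ is almost contained in $\Phi_{(v,w)}$ — the two supports agreeing on every eigenvalue except possibly $0$ — and then to transfer the criterion.

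First I would locate the two supports. Applying the eigenprojectors of Proposition~\ref{prop::3.2} to $\eket{(v,0)}$: the projectors $\eigenA{\mu_j}{G \star H}$ and $\eigenA{k}{G \star H}$ kill $\eket{(v,0)}$, since the $n$ columns indexed by $V(G)\times\{0\}$ vanish in each of them, while $\eigenA{\lambda_i^{+}}{G \star H}\eket{(v,0)}$ and $\eigenA{\lambda_i^{-}}{G \star H}\eket{(v,0)}$ (for $\lambda_i\neq 0$) and $\eigenA{0}{G \star H}\eket{(v,0)}$ are nonzero exactly when $\eigenA{\lambda_i}{G}\eket{v}\neq 0$, respectively $\eigenA{0}{G}\eket{v}\neq 0$; that is, exactly when $\lambda_i$, respectively $0$, lies in the support $\Phi_v$ of $v$ in $G$. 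Hence
\[
\Phi_{(v,0)}=\bigl\{\lambda_i^{+},\lambda_i^{-}:\lambda_i\in\Phi_v\setminus\{0\}\bigr\}\ \cup\ \bigl(\{0\}\ \text{if}\ 0\in\Phi_v\bigr).
\]
Running the same computation with $\eket{(v,w)}$ shows $\lambda_i^{\pm}\in\Phi_{(v,w)}$ iff $\lambda_i\in\Phi_v$ (for $\lambda_i\neq0$), and $k\in\Phi_{(v,w)}$ whenever $0\in\Phi_v$. Thus $\Phi_{(v,0)}$ and $\Phi_{(v,w)}$ contain exactly the same numbers $\lambda_i^{\pm}$ with $\lambda_i\neq0$, and $\Phi_{(v,0)}\setminus\Phi_{(v,w)}\subseteq\{0\}$, with the difference nonempty only when $0\in\Phi_v$.

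It remains to transfer periodicity, and the case $0\in\Phi_v$ is where the real work lies and is the step I expect to be the main obstacle. If $0\notin\Phi_v$, or if $k=0$ (then $H$ is edgeless and, should $0\in\Phi_v$, one also has $0\in\Phi_{(v,w)}$), we get $\Phi_{(v,0)}\subseteq\Phi_{(v,w)}$, so periodicity of $(v,w)$ passes to $(v,0)$ by heredity. So assume $0\in\Phi_v$ and $k\geq1$; if moreover $\Phi_v=\{0\}$ then $\Phi_{(v,0)}\subseteq\{0\}\subseteq\Z$ and $(v,0)$ is periodic, so assume also $\Phi_v\neq\{0\}$, whence $\rho(G)\in\Phi_v\setminus\{0\}$. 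Since $0\in\Phi_v$, the integer $k$ lies in $\Phi_{(v,w)}$. Suppose for contradiction that $(v,w)$ were periodic via alternative (ii) of Theorem~\ref{lem::2.3} with square-free $\Delta\geq2$: as $k$ is an integer and $\sqrt\Delta$ is irrational, $k=\tfrac12(a+b_k\sqrt\Delta)$ forces $a=2k$, so every element of $\Phi_{(v,w)}$ has the shape $k+e\sqrt\Delta$. For $\lambda_i\in\Phi_v\setminus\{0\}$ write $\lambda_i^{\pm}=k+e_i^{\pm}\sqrt\Delta$; the relation $\lambda_i^{+}+\lambda_i^{-}=\lambda_i+k$ gives $\lambda_i=k+(e_i^{+}+e_i^{-})\sqrt\Delta$, and inserting this into $\lambda_i^{+}\lambda_i^{-}=\lambda_i(k-m\lambda_i)$ and comparing coefficients of $\sqrt\Delta$ forces $2mk(e_i^{+}+e_i^{-})=0$, so (using $k,m\geq1$) $\lambda_i=k$. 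Since this holds for every $\lambda_i\in\Phi_v\setminus\{0\}$, we get $\Phi_v=\{0,k\}$ with $k=\rho(G)$; but then $\eket{v}$ lies in the span of the $0$- and $\rho(G)$-eigenspaces, so $A(G)\eket{v}$ — the $0/1$ indicator of $N_G(v)$, which is nonzero (as $G$ is connected on at least two vertices) yet has a zero coordinate at $v$ — would be a scalar multiple of the strictly positive Perron vector spanning the one-dimensional $\rho(G)$-eigenspace, which is absurd. Hence $(v,w)$ can be periodic only via alternative (i): all of $\Phi_{(v,w)}$, and therefore all the $\lambda_i^{\pm}$, are integers, so $\Phi_{(v,0)}\subseteq\Z$ and $(v,0)$ is periodic.
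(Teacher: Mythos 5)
Your proof is correct, and it is in fact more careful than the paper's own argument. The paper disposes of this lemma in one line: citing Proposition~\ref{prop::3.2}, it asserts that $\Phi_{(v,0)}\subseteq\Phi_{(v,w)}$ and then implicitly uses heredity of the periodicity criterion of Theorem~\ref{lem::2.3}. You correctly observe that this containment is not true in general: when $0\in\Phi_{v}$ and $k\neq 0$, the projector $\eigenA{0}(G\star H)$ puts $0$ into $\Phi_{(v,0)}$, whereas $\eigenA{0}(G\star H)\eket{(v,w)}=0$, so $0$ need not lie in $\Phi_{(v,w)}$ (for instance $G=P_3$ with $v$ an end vertex and $H=K_2$); apart from this one eigenvalue the two supports agree on all $\lambda^{\pm}$, exactly as you compute. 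Your extra argument for the problematic case --- using $k\in\Phi_{(v,w)}$ to force $a=2k$ in the quadratic alternative of Theorem~\ref{lem::2.3}, then the relations $\lambda^{+}+\lambda^{-}=\lambda+k$ and $\lambda^{+}\lambda^{-}=\lambda(k-m\lambda)$ to force $\lambda=k$ for every nonzero $\lambda\in\Phi_{v}$, and finally the Perron--Frobenius contradiction with $A(G)\eket{v}$ --- is sound, and it shows that in that configuration periodicity of $(v,w)$ can only be of the all-integer type, so adjoining the integer $0$ to the support is harmless. The only caveat is that your closing contradiction uses connectivity of $G$ (so that $\rho(G)\in\Phi_{v}$ and the $\rho(G)$-eigenspace is spanned by a positive vector); this is not in the lemma's literal hypotheses, but it is the standing assumption of Proposition~\ref{prop::3.2} and of the rest of Section~\ref{sec::4}, and your separate treatment of $\Phi_{v}=\{0\}$ covers the trivial cases. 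With that understood, your proof follows the same basic strategy as the paper (support comparison via Proposition~\ref{prop::3.2} plus Theorem~\ref{lem::2.3}) but supplies a genuinely new case analysis that repairs a real gap in the paper's one-line justification.
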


The following lemma gives an algebraic characterization for the periodicity of $(v,0)$ in  $G \star H$.
\begin{lem} \label{lem::4.2}
Let $G$ be a connected graph on $n$ vertices with $n \geqslant 2$ and $H$ be a $k$-regular graph on $m$ vertices.
Then $G \star H$ is  periodic at  $(v,0)$
if and only if the following conditions are satisfied:
\begin{enumerate}[(i)]
    \item \label{item::1} $k=0$;

    \item \label{item::2} $m=\frac{1}{4}(t-1)(t+1)$, where  $t > 1$ is an odd positive integer;

    \item \label{item::3} Each $\lambda \in \Phi_{v}$ is an integer or there exists a positive square-free integer $\Delta$
such that each $\lambda \in \Phi_{v}$ is a non-zero integer multiple of $\sqrt{\Delta}$.
\end{enumerate}
\end{lem}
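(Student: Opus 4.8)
The plan is to pin down the eigenvalue support $\Phi_{(v,0)}$ from Proposition~\ref{prop::3.2} and feed it into Godsil's periodicity criterion, Theorem~\ref{lem::2.3}. To do this one applies each eigenprojector of Proposition~\ref{prop::3.2} to $\eket{(v,0)}$: the blocks coming from $\eigenA{\mu_{j}}{H}$ and the $\frac1m J_m$-block attached to the eigenvalue $k$ annihilate $\eket{(v,0)}$, while for $\lambda\in\Sp(G)\setminus\{0\}$ the projector $\eigenA{\lambda^{\pm}}{G\star H}$ sends $\eket{(v,0)}$ to a nonzero multiple of a vector whose $V(G)$-block is $\eigenA{\lambda}{G}\eket{v}$ (here one checks $\lambda^{\pm}\neq k$ when $\lambda\neq 0$), and $\eigenA{0}{G\star H}$ sends it to $\eigenA{0}{G}\eket{v}$. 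Hence $\Phi_{(v,0)}=\{\lambda^{\pm}:\lambda\in\Phi_{v}\setminus\{0\}\}\cup\bigl(\{0\}\text{ if }0\in\Phi_{v}\bigr)$. I will use three facts throughout: $k$ is a non-negative integer with $k\le m-1$; $\rho(G)>0$ always lies in $\Phi_{v}$; and $|\Phi_{v}|\ge 2$ because $\eket{v}$ is a coordinate vector and $n\ge 2$. Also, every nonzero eigenvalue $\lambda$ relevant below satisfies $|\lambda|\ge 1$, so $\lambda^{+}\lambda^{-}=\lambda(k-m\lambda)<0$ and $\lambda^{+},\lambda^{-}$ have opposite signs.

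For sufficiency I would simply substitute: if $k=0$ then $\lambda^{\pm}=\frac12\lambda\bigl(1\pm\sqrt{4m+1}\bigr)$, and if $4m+1=t^{2}$ with $t$ odd then $\frac12(1\pm t)\in\Z$, so $\lambda^{\pm}$ is an integer multiple of $\lambda$. Under the first alternative of (iii) every element of $\Phi_{(v,0)}$ is then an integer and Theorem~\ref{lem::2.3}(i) applies; under the second, $0\notin\Phi_{(v,0)}$ and every element has the form $\frac12(0+b\sqrt{\Delta})$ with $b\in\Z$, so Theorem~\ref{lem::2.3}(ii) applies.

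For necessity I would split on which clause of Theorem~\ref{lem::2.3} holds for $\Phi_{(v,0)}$. In the quadratic case there are a square-free $\Delta\ge 2$ and an integer $a$ with each $\theta\in\Phi_{(v,0)}$ equal to $\frac12(a+b_{\theta}\sqrt{\Delta})$ ($\Delta=1$ is excluded because it would place all of $\Phi_{(v,0)}$ in $\Z$, graph eigenvalues being algebraic integers). Writing $\lambda^{\pm}=\frac12(a+b^{\pm}\sqrt{\Delta})$ for $\lambda\in\Phi_{v}\setminus\{0\}$ and summing gives $\lambda=A+B_{\lambda}\sqrt{\Delta}$ with the \emph{same} integer $A:=a-k$ for all such $\lambda$; plugging into $(\lambda^{+}-\lambda^{-})^{2}=(\lambda-k)^{2}+4m\lambda^{2}$ and forcing the $\sqrt{\Delta}$-term to vanish gives $B_{\lambda}\bigl((4m+1)A-k\bigr)=0$. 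If $B_{\lambda}=0$ for all $\lambda$, then every nonzero element of $\Phi_{v}$ equals $A=\rho(G)>0$, so $0\in\Phi_{v}$ (as $|\Phi_{v}|\ge 2$), whence $a=0$ and $A=-k\le 0$, a contradiction; therefore $(4m+1)A=k$, and $k\le m-1<4m+1$ forces $A=0$, i.e. $k=0$ and $a=0$. Then $\lambda=B_{\lambda}\sqrt{\Delta}$, and since $\lambda^{+}=\frac12 b^{+}\sqrt{\Delta}$ is an algebraic integer with $\Delta$ square-free, $b^{+}$ is even and $B_{\lambda}\in\Z\setminus\{0\}$; finally $\lambda^{+}-\lambda^{-}=|B_{\lambda}|\sqrt{\Delta}\,\sqrt{4m+1}\in\Q\sqrt{\Delta}$ forces $\sqrt{4m+1}\in\Q$, hence $4m+1=t^{2}$ with $t$ odd (as $4m+1$ is odd) and $t>1$ (as $m\ge 1$). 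This yields (i), (ii), and the second alternative of (iii), with $0\notin\Phi_{v}$.

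The step I expect to be the main obstacle is the remaining integral case $\Phi_{(v,0)}\subseteq\Z$. There $\lambda^{+}+\lambda^{-}=\lambda+k\in\Z$ gives $\lambda\in\Z$ for all $\lambda\in\Phi_{v}$ (so (iii) holds in its first form), and $(\lambda^{+}-\lambda^{-})^{2}=(4m+1)\lambda^{2}-2k\lambda+k^{2}$ must be a perfect square for every nonzero $\lambda\in\Phi_{v}$ (equivalently $\bigl((4m+1)\lambda-k\bigr)^{2}+4mk^{2}$ is $(4m+1)$ times a perfect square). Extracting $k=0$ from this Diophantine constraint---after which $4m+1=t^{2}$ follows as in the quadratic case---is the delicate point: my intended route is to impose it simultaneously at $\lambda=\rho(G)$ and at a second eigenvalue of $\Phi_{v}$ (which exists since $|\Phi_{v}|\ge 2$) and to combine the two identities with the size bound $k\le m-1$ to rule out $k\ge 1$. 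This elementary but subtle number-theoretic argument is where the real work lies; it is also the place where, if the stated conditions turn out not to be necessary in full generality, an extra hypothesis on $H$ would have to be inserted. Once $k=0$ is in hand, conclusions (i) and (ii) drop out exactly as in the converse computation of Step~2.
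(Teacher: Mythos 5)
Your overall strategy is the same as the paper's: read off $\Phi_{(v,0)}$ from Proposition \ref{prop::3.2} (your version, which excludes $k$ and keeps only $0$ when $0\in\Phi_{v}$, is in fact more precise than the paper's Eq.~\eqref{eq::35}), feed it into Theorem \ref{lem::2.3}, prove sufficiency by direct substitution, and split necessity into the integer and the quadratic case; your comparison of $\sqrt{\Delta}$-coefficients in the quadratic case is equivalent to the paper's use of $(\lambda^{+}-k)(\lambda^{-}-k)=-m\lambda^{2}$, and your handling of the branch $B_{\lambda}=0$ (via $\rho(G)\in\Phi_{v}$ and $|\Phi_{v}|\geqslant 2$) and of the integrality of $B_{\lambda}$ is, if anything, more careful than the paper's. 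The genuine gap is the integer case of necessity: you correctly isolate the constraint that $(4m+1)\lambda^{2}-2k\lambda+k^{2}$ must be a perfect square for every $\lambda\in\Phi_{v}$, but you never derive $k=0$ from it --- you only describe an ``intended route'' (imposing the condition simultaneously at $\rho(G)$ and at a second support eigenvalue and combining with $k\leqslant m-1$) and explicitly concede it may require an extra hypothesis. That Diophantine step is exactly what the necessity of conditions (\ref{item::1}) and (\ref{item::2}) rests on in this case; the paper does close it (its Case 1 argues, admittedly very tersely, that $k\neq 0$ leads to a contradiction from the perfect-square requirement), whereas your proposal leaves the forward implication proved only when $\Phi_{(v,0)}$ falls under the quadratic alternative of Theorem \ref{lem::2.3}. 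Until this case is carried out, the proof is incomplete.

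A secondary issue: at the end of the quadratic case you assert ``with $0\notin\Phi_{v}$,'' but nothing in your argument establishes this, and it is not harmless. Since $0=\tfrac12(0+0\cdot\sqrt{\Delta})$ is compatible with Theorem \ref{lem::2.3}(ii) with $a=0$, a vertex whose support mixes $0$ with nonzero integer multiples of $\sqrt{\Delta}$ (for instance an end vertex of $P_{3}$, with $\Phi_{v}=\{0,\pm\sqrt{2}\}$) still makes $(v,0)$ periodic when $k=0$ and $4m+1$ is an odd square, yet such a support satisfies neither alternative of condition (\ref{item::3}) as literally stated. So this boundary case must be confronted rather than asserted away --- it is also passed over in the paper's own Case 2, but in your write-up the claim $0\notin\Phi_{v}$ is doing real work in delivering (\ref{item::3}) and currently has no justification.
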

\begin{proof}
By Proposition \ref{prop::3.2}, the eigenvalue support of $(v,0)$ in $G \star H$ is
\begin{equation}  \label{eq::35}
\Phi_{(v,0)} = \left\{\lambda^\pm=\dfrac{1}{2}\left(\lambda + k \pm \sqrt{(\lambda - k)^2 + 4m\lambda^2}\right):\lambda \in \Phi_{v}\right\}.
\end{equation}

\vspace{2mm}
\noindent{\bf{For the sufficiency part.}}  Suppose that the conditions (\ref{item::1}), (\ref{item::2}) and (\ref{item::3}) hold. Let $\lambda = a\sqrt{\Delta}$ with $a \in \Z \verb|\| \{0\}$, and $\Delta$ equal to 1 or positive square-free integer . By \eqref{eq::35}, we obtain $\lambda^\pm = a\sqrt{\Delta}(1 \pm \sqrt{1+4m})/2$.
Since $a(1 \pm \sqrt{1+4m})/2=a(1 \pm t)/2 \in \Z$, by Theorem \ref{lem::2.3},  $G \star H$  is periodic at $(v,0)$.

\vspace{2mm}		
\noindent{\bf{For the necessity part.}}
Suppose that $G \star H$ is periodic at $(v,0)$.
By Theorem \ref{lem::2.3}, it suffices to consider the following two situations.

\begin{case}
	For each $\lambda \in \Phi_{v}$, 		
	suppose that all elements of $\Phi_{(v,0)}$ are integers. Then $(\lambda^+ - \lambda^-)^{2} = (1+4 m)\lambda^2 - 2\lambda k + k^2$ are perfect square integers. Suppose $k\neq 0$, then $m\neq 0$ and $(1+4 m)\lambda^2 - 2\lambda k + k^2$ must be a perfect square. So $1+4 m =1$, then $m=0$, a contradiction. If $k=0$, then $(\lambda^+ - \lambda^-)^{2} = (1+4 m)\lambda^2$ are perfect square integers. So $\lambda$ is an integer and $1+4 m =t^2$, then $t = \sqrt{1+4m}> 1$ is an odd positive integer.
\end{case}
\begin{case}
	Suppose that each element of $\Phi_{(v,0)}$ is of the form
	\begin{equation*}
		\lambda^\pm = \frac{1}{2}(a + b_{\pm}\sqrt{\Delta}),
	\end{equation*}
    where $a$, $b_{\pm}$ are integers, and $\Delta$ is a square-free integer.
	Since $\lambda^+ + \lambda^- = \lambda + k = a + \frac{1}{2}(b_{+} + b_{-})\sqrt{\Delta}$, we have $\lambda = a - k + \frac{1}{2}(b_{+} + b_{-})\sqrt{\Delta}$.
According to the proof of Proposition \ref{prop::3.3},  $(\lambda^+-k)(\lambda^- -k)= -m \lambda^2$. Thus
	\begin{equation}\label{eq::36}
		\begin{aligned}
			\frac{1}{4}&\left((a-2k)^2 + b_{+} b_{-} \Delta
			+ (a-2k)(b_{+} + b_{-})\sqrt{\Delta}\right)\\
			&= -m \left((a-k)^2 + \frac{1}{4}(b_{+} + b_{-})^2 \Delta + (a-k)(b_{+} + b_{-})\sqrt{\Delta}\right).
		\end{aligned}
	\end{equation}
	Since $\sqrt{\Delta}$ is not an integer, by comparing the coefficients of Eq. \eqref{eq::36}, we get $\frac{1}{4} (a-2k) (b_{+} + b_{-}) = -m (a-k) (b_{+} + b_{-}). $
	If $b_{+} + b_{-}=0$, then $a = \lambda^+ + \lambda^- = \lambda + k$, and so  $|\Phi_{v}| = 1$, which is impossible because  $G$ is a connected graph on $n$ vertices with $n \geqslant 2$.
Thus $b_{+} + b_{-}\neq 0$, and  $a = k(4m+2) (4m+1)^{-1}$ is an integer.
	Since $k\leqslant m-1$, we have $a = k = 0$, which gives that $\lambda = \frac{1}{2} (b_{+} + b_{-}) \sqrt{\Delta}$ , and
	$\lambda^\pm = \frac{1}{2}b_{\pm}\sqrt{\Delta} = \frac{1}{2} \left( 1 \pm \sqrt{1 + 4m} \right)\lambda$. Thus $t = \sqrt{1 + 4m} > 1$ is an odd positive integer.
\end{case}
\end{proof}

For $m=\frac{1}{4}(t-1)(t+1)$ with odd positive integer $t > 1$, according to Lemma \ref{lem::4.2}, we find that the eigenvalues of $G \star \overline{K}_{m}$ are integer multiples of the eigenvalues of $G$. If $(v,0)$ is periodic in the neighborhood corona $G \star \overline{K}_{m}$, then $v$ is periodic in $G$. Therefore, we get the following lemma.
\begin{lem}\label{lem::4.3}
Let $G$ be a connected graph on $n$ vertices with $n \geqslant 2$. If $(v,0)$ is periodic in the neighborhood corona $G \star \overline{K}_{m}$, then $v$ is periodic in $G$, in which $m=\frac{1}{4}(t-1)(t+1)$ for odd positive integer $t > 1$.
\end{lem}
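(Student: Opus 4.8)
The plan is to use Lemma \ref{lem::4.2} to pin down the structure of $\Phi_v$ and then appeal directly to Theorem \ref{lem::2.3} to conclude periodicity of $v$ in $G$. First I would invoke Lemma \ref{lem::4.2}: since $(v,0)$ is periodic in $G \star \overline{K}_m$ and $\overline{K}_m$ is $0$-regular (so $k=0$ automatically), conditions (ii) and (iii) of that lemma apply. Condition (ii) gives that $m = \tfrac14(t-1)(t+1)$ for some odd integer $t>1$, which is exactly the hypothesis recorded in the statement; the content we actually need is condition (iii): every $\lambda \in \Phi_v$ is either an integer, or there is a fixed positive square-free integer $\Delta$ such that every $\lambda \in \Phi_v$ is a non-zero integer multiple of $\sqrt{\Delta}$.

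Next I would check that $\Phi_v$, the eigenvalue support of $v$ in $G$, is unchanged under passing to the corona — more precisely, I only need that the eigenvalue support of $v$ with respect to $A_G$ is the set $\Phi_v$ appearing in \eqref{eq::35}, which is immediate from the definition and from Proposition \ref{prop::3.2}(ii) (the projectors $\eigenA{\lambda_i^\pm}(G\star H)$ carry a tensor factor $\eigenA{\lambda_i}(G)$, so $\eigenA{\lambda^\pm}(G\star H)\eket{(v,0)} \ne 0$ iff $\eigenA{\lambda}{G}\eket{v}\ne 0$). Then condition (iii) translates verbatim into the two alternatives of Theorem \ref{lem::2.3}: in the first case all eigenvalues in $\Phi_v$ are integers, matching alternative (i); in the second case each $\lambda \in \Phi_v$ has the form $\lambda = \tfrac12(0 + b_\lambda \sqrt{\Delta})$ with $b_\lambda = 2a_\lambda$ an even integer where $\lambda = a_\lambda\sqrt{\Delta}$, and with $a=0$, which is precisely alternative (ii) of Theorem \ref{lem::2.3}. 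Hence $G$ is periodic at $v$.

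This argument is essentially a bookkeeping composition of two earlier results, so I do not expect a genuine obstacle; the only point requiring a line of care is the claim implicit in the paragraph preceding the lemma, namely that ``the eigenvalues of $G\star\overline{K}_m$ are integer multiples of the eigenvalues of $G$'' — this is the computation $\lambda^\pm = \tfrac12\lambda(1 \pm \sqrt{1+4m}) = \tfrac12\lambda(1\pm t)$, valid because $k=0$, together with the fact that $\tfrac12(1\pm t)\in\Z$ since $t$ is odd. Once that is noted, the conclusion that periodicity of $(v,0)$ in the corona forces $\Phi_v$ into one of the two shapes of Theorem \ref{lem::2.3}, and therefore forces $v$ to be periodic in $G$, follows directly. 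I would write the proof in three or four sentences: cite Lemma \ref{lem::4.2} for $k=0$ and the form of $\Phi_v$, observe the support correspondence, and then quote Theorem \ref{lem::2.3} in the two cases.
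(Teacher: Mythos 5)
Your proposal is correct and follows essentially the same route as the paper, which establishes the lemma by the remark preceding it: apply Lemma \ref{lem::4.2} (with $k=0$ automatic for $\overline{K}_m$, condition (ii) forcing $m=\tfrac14(t-1)(t+1)$, and condition (iii) constraining $\Phi_v$, equivalently $\lambda^{\pm}=\tfrac12(1\pm t)\lambda$ being integer multiples of $\lambda$) and then conclude periodicity of $v$ in $G$ via Theorem \ref{lem::2.3}. Your write-up is in fact a bit more explicit than the paper's sketch (e.g.\ the case $\lambda=a_\lambda\sqrt{\Delta}=\tfrac12(0+2a_\lambda\sqrt{\Delta})$ matching alternative (ii)), but it is the same argument.
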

%
%
%
By Lemmas \ref{lem::2.2}, \ref{lem::4.1}, \ref{lem::4.2} and \ref{lem::4.3}, we can rule out some perfect state transfer in $G \star H$. Therefore, we can obtain the following some results.
\begin{thm}
Let $G$ be a connected graph on $n$ vertices with $n \geqslant 2$ and $H$ be a $k$-regular graph on $m$ vertices.
If $k=0$, $\sqrt{1+4m}$ is no integer or $k \geqslant 1$, then  $G \star H$ has no periodic vertices,
and therefore, has no perfect state transfer.
\end{thm}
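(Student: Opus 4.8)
The plan is to reduce the statement to the periodicity criterion already obtained in Lemma \ref{lem::4.2} and then apply Lemma \ref{lem::2.2}. The first step is the observation that it suffices to rule out periodicity at the "hub" vertices. Indeed, by Lemma \ref{lem::4.1}, if some vertex $(v,w)$ of $G \star H$ were periodic, then $(v,0)$ would be periodic as well; so it is enough to show that, under the hypotheses of the theorem, no vertex of the form $(v,0)$ is periodic in $G \star H$.

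The second step is to invoke Lemma \ref{lem::4.2}, whose conditions (i)--(iii) are necessary for $G \star H$ to be periodic at $(v,0)$. Condition (i) forces $k=0$, and condition (ii) forces $m = \frac{1}{4}(t-1)(t+1)$ for some odd integer $t>1$; since $4m = t^2-1$ this is precisely the statement that $1+4m = t^2$ is a perfect square, and as $1+4m$ is odd its integer square root (when it exists, and $m\geqslant 1$) is automatically an odd integer exceeding $1$. Hence, for $m\geqslant 1$, condition (ii) is equivalent to "$\sqrt{1+4m}$ is an integer." Consequently, the two alternatives in the hypothesis of the theorem --- namely $k\geqslant 1$, or else $k=0$ with $\sqrt{1+4m}\notin\Z$ --- together constitute exactly the negation of the conjunction of conditions (i) and (ii). In either case the necessary conditions of Lemma \ref{lem::4.2} fail, so $G \star H$ is not periodic at any $(v,0)$, and by the first step it is periodic at no vertex at all. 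Note that condition (iii) never needs to be examined.

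The final step is to pass from "no periodic vertex" to "no perfect state transfer": by Lemma \ref{lem::2.2}, perfect state transfer between $u$ and $v$ at time $t$ makes the graph periodic at $u$ at time $2t$, so a graph with no periodic vertex admits no perfect state transfer. Applying this to $G \star H$ completes the argument.

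I do not expect any real obstacle here, since the substantive work has been carried out in Lemmas \ref{lem::4.1}, \ref{lem::4.2}, and \ref{lem::2.2}. The only point requiring a moment's care is the elementary number-theoretic equivalence between "$m=\frac{1}{4}(t-1)(t+1)$ for an odd integer $t>1$" and "$\sqrt{1+4m}\in\Z$", together with the bookkeeping check that the two disjuncts in the hypothesis really do exhaust the failure of conditions (i) and (ii) of Lemma \ref{lem::4.2}.
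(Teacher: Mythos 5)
Your proposal is correct and follows essentially the same route as the paper: negate conditions (i)--(ii) of Lemma \ref{lem::4.2} to rule out periodicity at the vertices $(v,0)$, lift this to all vertices via Lemma \ref{lem::4.1}, and conclude with Lemma \ref{lem::2.2}. The only difference is that you spell out the elementary equivalence between condition (ii) and ``$\sqrt{1+4m}\in\Z$'', which the paper leaves implicit.
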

\begin{proof}
For $k=0$, $\sqrt{1+4m}$ is no integer or $k \geqslant 1$, we can see that one of the conditions (\ref{item::1}),(\ref{item::2}) of the Lemma \ref{lem::4.2} is not satisfied. Therefore, $G \star H$ is no periodic at $(v,0)$. By Lemma \ref{lem::4.1}, $G \star H$ is no periodic at $(v,w)$, therefore, $G \star H$ has no periodic vertices. Hence, by Lemma \ref{lem::2.2}, $G \star H$ has no perfect state transfer.
\end{proof}

\begin{thm}\label{thm::4.6}
Let $G$ be a connected graph on $n$ vertices with $n \geqslant 2$ and $H$ be a $k$-regular graph on $m$ vertices.
If $G$ has no periodic vertices, then  $G \star H$ has no periodic vertices, and therefore, has no perfect state transfer.
\end{thm}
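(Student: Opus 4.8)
The plan is to argue by contradiction, chaining together the lemmas already established in this section; no new computation is needed. Suppose, for contradiction, that $G \star H$ has a periodic vertex. Every vertex of $G \star H$ has the form $(v,0)$ or $(v,w)$ with $v \in V(G)$ and $w \in V(H)$, so the periodic vertex belongs to one of these two families. If it is of the form $(v,w)$, then Lemma \ref{lem::4.1} guarantees that $(v,0)$ is also periodic in $G \star H$. Hence in either case we may assume that $(v,0)$ is periodic in $G \star H$ for some $v \in V(G)$.

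Next I would apply Lemma \ref{lem::4.2}: periodicity of $(v,0)$ forces in particular that $k = 0$, which (as $H$ is $k$-regular) means $H$ has no edges, i.e.\ $H = \overline{K}_{m}$, and moreover $m = \tfrac{1}{4}(t-1)(t+1)$ for some odd positive integer $t > 1$. With $H = \overline{K}_{m}$ and $m$ of this precise form, Lemma \ref{lem::4.3} applies directly and tells us that the vertex $v$ is periodic in $G$. This contradicts the hypothesis that $G$ has no periodic vertices. Therefore $G \star H$ has no periodic vertices.

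The ``and therefore'' clause then follows from Lemma \ref{lem::2.2}: if $G \star H$ admitted perfect state transfer between two of its vertices at some time $t$, then it would be periodic at one of those vertices at time $2t$; since $G \star H$ has no periodic vertex, it admits no perfect state transfer.

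I do not expect any genuine obstacle here, since all the substantive work has been front-loaded into Lemmas \ref{lem::4.1}--\ref{lem::4.3}. The only points requiring care are bookkeeping ones: verifying that the two families $(v,0)$ and $(v,w)$ exhaust all vertices of $G \star H$ so that the reduction via Lemma \ref{lem::4.1} is complete, and observing explicitly that $k = 0$ for a regular $H$ forces $H = \overline{K}_{m}$, which is exactly the hypothesis under which Lemma \ref{lem::4.3} is stated.
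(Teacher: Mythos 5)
Your proof is correct and follows essentially the same route as the paper: reduce a periodic vertex $(v,w)$ to a periodic vertex $(v,0)$ via Lemma \ref{lem::4.1}, pass to periodicity of $v$ in $G$ via Lemma \ref{lem::4.3}, and conclude no perfect state transfer via Lemma \ref{lem::2.2}. Your explicit invocation of Lemma \ref{lem::4.2} to justify that $H=\overline{K}_{m}$ with $m=\tfrac{1}{4}(t-1)(t+1)$, so that Lemma \ref{lem::4.3} actually applies, is a point the paper leaves implicit, and it makes your write-up slightly more careful.
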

\begin{proof}
If $G$ has no periodic vertices, by Lemma \ref{lem::4.1} and \ref{lem::4.3}, then $G \star H$ has no periodic vertices. Hence, by Lemma \ref{lem::2.2}, $G \star H$ has no perfect state transfer.
\end{proof}

In what follows, we apply Proposition \ref{prop::3.3} to prove there is no perfect state transfer in $G \star H$.

\begin{thm}\label{thm::4.7}
Let $G$ be a connected graph on $n$ vertices with $n \geqslant 2$ and $H$ be a connected $k$-regular graph on $m$ vertices. If $v,v'$ are two distinct vertices of $G$, then there exists no perfect state transfer between $(v,0)$ and $(v',0)$ in $G \star H$; if $v,v'$ are two vertices (can be the same vertex) of $G$, and $w$ is a vertex of $H$, then there exists no perfect state transfer and between $(v',0)$ and $(v,w)$ in $G \star H$ .
\end{thm}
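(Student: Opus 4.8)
The plan is to reduce both parts of the statement to a periodicity obstruction and then contradict Lemma~\ref{lem::4.2}. Recall from Lemma~\ref{lem::2.2} that perfect state transfer between two vertices at time $t$ makes $G\star H$ periodic at each of those vertices at time $2t$; so it is enough to prove that $G\star H$ has \emph{no} periodic vertex at all. By Lemma~\ref{lem::4.1}, periodicity at a vertex $(v,w)$ implies periodicity at $(v,0)$, so I only have to rule out periodicity at every vertex of the form $(v,0)$.

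To do that I would apply Lemma~\ref{lem::4.2}: suppose $G\star H$ were periodic at some $(v,0)$. Part~(\ref{item::1}) of that lemma forces $k=0$. But $H$ is connected and $k$-regular, and a connected graph all of whose vertices have degree $0$ must consist of a single vertex; hence $k=0$ forces $m=1$. On the other hand, part~(\ref{item::2}) requires $m=\tfrac14(t-1)(t+1)$ for some odd integer $t>1$, and the smallest such value (at $t=3$) is $2$, so $m\geqslant 2$ — a contradiction. Therefore no $(v,0)$ is periodic, hence no vertex of $G\star H$ is periodic, and by Lemma~\ref{lem::2.2} there is no perfect state transfer between any pair of vertices of $G\star H$; in particular there is none between $(v,0)$ and $(v',0)$ when $v\ne v'$, and none between $(v',0)$ and $(v,w)$.

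For the $(v',0)$--$(v,w)$ case I would also record the self-contained argument suggested by the remark before the theorem, which bypasses periodicity. In the formula of Proposition~\ref{prop::3.3}~(\ref{item::prop::3.3::2}) the $\lambda=0$ term vanishes, and for every $\lambda\ne0$ one has $\Lambda_\lambda^2=(\lambda-k)^2+4m\lambda^2>4\lambda^2$ — clear when $m\geqslant 2$, and reducing to $\lambda^2>0$ when $m=1$ (so $H=K_1$, $k=0$). Hence $2|\lambda|/\Lambda_\lambda\leqslant c$ for a constant $c<1$ depending only on $H$. Bounding $|\ebra{(v',0)}e^{-\ii tA(G\star H)}\eket{(v,w)}|$ by the triangle inequality, then using $|\ebra{v}\eigenA{\lambda}{G}\eket{v'}|\leqslant\sqrt{\ebra{v}\eigenA{\lambda}{G}\eket{v}\cdot\ebra{v'}\eigenA{\lambda}{G}\eket{v'}}$ and a Cauchy--Schwarz step (each of $\sum_{\lambda}\ebra{v}\eigenA{\lambda}{G}\eket{v}$ and $\sum_{\lambda}\ebra{v'}\eigenA{\lambda}{G}\eket{v'}$ equals $1$), the factor $c<1$ survives, giving modulus $\leqslant c<1$ for all $t\in\R^+$ and hence no perfect state transfer between $(v',0)$ and $(v,w)$.

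The step I expect to be the real obstacle is the borderline case $m=1$, i.e.\ $H=K_1$: there part~(\ref{item::1}) of Lemma~\ref{lem::4.2} is satisfied, so one genuinely needs part~(\ref{item::2}) — together with the fact that $\tfrac14(t^2-1)=1$ has no admissible solution — to force non-periodicity (one should also confirm that Lemma~\ref{lem::4.2} applies, which it does since it only requires $n\geqslant 2$ and $H$ regular). I would not expect a purely spectral proof of the first part along the lines of Proposition~\ref{prop::3.3}~(\ref{item::prop::3.3::1}) to work on its own, because there the coefficient $\cos(t\Lambda_\lambda/2)-\tfrac{\lambda-k}{\Lambda_\lambda}\ii\sin(t\Lambda_\lambda/2)$ can have modulus exactly $1$; that is why routing the first part through the periodicity characterisation of Lemma~\ref{lem::4.2} is the natural choice.
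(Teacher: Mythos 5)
Your proposal is correct, but for the first claim it takes a genuinely different route from the paper. The paper proves both claims directly and self-containedly from Proposition \ref{prop::3.3}: it expands the $((v,0),(v',0))$ and $((v',0),(v,w))$ amplitudes, applies the triangle inequality, bounds the $\lambda$-coefficients in modulus, and uses $\sum_{\lambda}|\ebra{v}\eigenA{\lambda}{G}\eket{v'}|\leqslant 1$; for the $(v',0)$--$(v,w)$ case this is exactly the estimate you give (your uniform constant $c=\max_{\lambda\neq 0}2|\lambda|/\Lambda_\lambda<1$, and your explicit treatment of the $\lambda=0$ term and of the borderline $m=1$, make the same argument slightly cleaner). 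For the $(v,0)$--$(v',0)$ case you instead argue via periodicity: Lemma \ref{lem::2.2} and Lemma \ref{lem::4.1} reduce everything to periodicity at vertices of the form $(v,0)$, and Lemma \ref{lem::4.2} combined with connectivity of $H$ (condition (\ref{item::1}) forces $k=0$, hence $H=K_1$ and $m=1$, contradicting $m=\tfrac14(t-1)(t+1)\geqslant 2$ from condition (\ref{item::2})) rules this out; that is valid, and it in fact yields the stronger conclusion that no pair of vertices of $G\star H$ admits perfect state transfer, essentially re-deriving, for connected $H$, the periodicity-based non-existence theorem proved just before Theorem \ref{thm::4.6}. What the paper's route buys is independence from Lemma \ref{lem::4.2} and its number-theoretic case analysis, together with an amplitude bound at every time $t$; what your route buys is robustness exactly where the paper's first chain of inequalities is delicate: as you observe, the coefficient $\cos(t\Lambda_\lambda/2)-\tfrac{\lambda-k}{\Lambda_\lambda}\,\ii\sin(t\Lambda_\lambda/2)$ has modulus exactly $1$ whenever $\sin(t\Lambda_\lambda/2)=0$ (and identically $1$ for $\lambda=0$), so the strict inequality asserted in the paper's first estimate is not uniform in $t$ and requires further justification, which your periodicity argument supplies, whereas in the second case the factor $2|\lambda|/\Lambda_\lambda<1$ makes the strict bound unconditional for both you and the paper.
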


\begin{proof}
For vertices $v$ and $v'$ of $G$, by Proposition \ref{prop::3.3}(\ref{item::prop::3.3::1}), we have
\begin{equation*}
	\ebra{(v,0)} e^{-\ii tA(G \star H)} \eket{(v',0)}
	= \sum_{\lambda \in \Sp(G)} e^{-\ii t(\lambda + k)/2}
		\ebra{v}\eigenA{\lambda}{G}\eket{v'}
		\left( \cos\left(t\Lambda_\lambda/2\right)
			- \frac{\lambda - k}{\Lambda_\lambda} \ii \sin\left(t\Lambda_\lambda/2 \right) \right),
\end{equation*}
where $\Lambda_\lambda = \sqrt{(\lambda - k)^2 + 4m\lambda^2}$, and $\Sp(G)$ is the set of all distinct eigenvalues of $G$.

We get chain of inequalities
\begin{equation*}
\begin{split}
	\left|\ebra{(v,0)} e^{-\ii tA(G \star H)} \eket{(v',0)}\right|
	&\leq \sum_{\lambda \in \Sp(G)} \left|\ebra{v}\eigenA{\lambda}{G}\eket{v'}\right|\left|\cos\left(t\Lambda_\lambda/2\right)
			- \frac{\lambda - k}{\Lambda_\lambda} \ii \sin\left(t\Lambda_\lambda/2 \right)\right|\\
    &<\sum_{\lambda \in \Sp(G)} \left|\ebra{v}\eigenA{\lambda}{G}\eket{v'}\right|\leq 1.
\end{split}
\end{equation*}

  For vertices $v$ and $v'$ of $G$, and $w$ of $H$, by Proposition \ref{prop::3.3}(\ref{item::prop::3.3::2}), we have
\begin{equation*}
	\ebra{(v',0)} e^{-\ii tA(G \star H)} \eket{(v,w)}
	= \sum_{\lambda \in \Sp(G)} e^{-\ii t(\lambda + k)/2}
		\ebra{v}\eigenA{\lambda}{G}\eket{v'}
		 \frac{-2\lambda}{\Lambda_\lambda} \ii \sin\left(t\Lambda_\lambda/2 \right),
\end{equation*}
where $\Lambda_\lambda = \sqrt{(\lambda - k)^2 + 4m\lambda^2}$, and $\Sp(G)$ is the set of all distinct eigenvalues of $G$.

We get chain of inequalities
\begin{equation*}
\begin{split}
	\left|\ebra{(v',0)} e^{-\ii tA(G \star H)} \eket{(v,w)}\right|
	&\leq \sum_{\lambda \in \Sp(G)}
		\left|\ebra{v}\eigenA{\lambda}{G}\eket{v'}\right|
		 \left|\frac{-2\lambda}{\Lambda_\lambda} \ii \sin\left(t\Lambda_\lambda/2 \right)\right|\\
   &<\sum_{\lambda \in \Sp(G)} \left|\ebra{v}\eigenA{\lambda}{G}\eket{v'}\right|\leq 1.
\end{split}
\end{equation*}
\end{proof}
%

%
%
%
%

\section{Pretty Good State Transfer}\label{sec::5}

In this section, we consider pretty good state transfer for the neighborhood corona $G \star H$.
\begin{thm} \label{thm::5.1}
Let $G$ be a connected graph, $u,v$ be two vertices of $G$, and $H$ be a connected $k$-regular graph $(k \neq 0)$ on $m$ vertices.
Suppose that there exists perfect state transfer between $u$ and $v$ at time $t = \pi/g$, for some positive integer $g$,
and that $0$ is not in the eigenvalue support of $u$. Then there exists pretty good state transfer
between $(u,0)$ and $(v,0)$ in $G \star H$.

\begin{proof}
Let $\Phi_{u}$ be the eigenvalue support of $u$ in $G$.
By Theorem \ref{thm::2.1},
if $G$ has perfect state transfer at time $\pi/g$ between the vertices $u$ and $v$, for some integer $g$,
then all eigenvalues in $\Phi_{u}$ must be integers.
For each eigenvalue $\lambda$ in $\Phi_{u}$, let $c_\lambda$ be the square-free part of $(\lambda - k)^2 + 4 m \lambda^2$, 	
so that $\Lambda_\lambda = \sqrt{(\lambda - k)^2 + 4 m \lambda^2} = s_\lambda \sqrt{\mathstrut c_\lambda}$ for some integers $s_\lambda$.
Since $0 \not\in\Phi_{u}$,
then $\Lambda_\lambda$ is irrational and $c_\lambda > 1$ for each $\lambda$ in $S$.

By Corollary \ref{cor::2.6},
$\{\sqrt{\mathstrut c_\lambda} : \lambda \in \Phi_{u}\} \cup \{1\}$
is linearly independent over $\mathbb Q$.
By Theorem \ref{thm::2.4}, this implies that there exists integers $\ell, q_\lambda$ such that
\begin{equation*}
	\ell \sqrt{\mathstrut c_\lambda} - q_\lambda \ \approx \ -\frac{\sqrt{\mathstrut c_\lambda}}{2g}.
\end{equation*}
Multiplying by $4s_\lambda$ yields that
\begin{equation*}
	\left(4\ell + \frac{2}{g}\right) \Lambda_\lambda \ \approx \ 4 q_\lambda s_\lambda.
\end{equation*}
Therefore, if $t = (4\ell + 2/g)\pi$, then  $\cos(\Lambda_\lambda t /2) \approx 1$ for each $\lambda$ in $S$.
By Proposition \ref{prop::3.3}(\ref{item::prop::3.3::1}), we have
\begin{equation*}
\begin{split}
\left|\ebra{(u,0)} e^{-\ii tA(G\star H)} \eket{(v,0)}\right|
	& =\left| \sum_{\lambda \in \Sp(G)} e^{-\ii t(\lambda+k)/2} \left(\cos(\Lambda_\lambda t/2)
	- \frac{\lambda-k}{\Lambda_\lambda} \ii \sin(\Lambda_\lambda t/2) \right) \ebra{u} \eigenA{\lambda}{G} \eket{v}\right| \\
	& \approx\left| \sum_{\lambda \in \Sp(G)} e^{- \ii (2\pi) \ell \lambda} e^{- \ii t k/2}
          e^{-\ii (\pi/g) \lambda}
		\ebra{u}\eigenA{\lambda}{G} \eket{v} \right|\\
	& =\left| e^{- \ii t k/2}  \ebra{u} e^{-\ii (\pi/g) A(G)} \eket{v}\right|\\
    & =\left| e^{- \ii (\pi/g) k}  \ebra{u} e^{-\ii (\pi/g) A(G)} \eket{v}\right|\\
    & =1,
\end{split}
\end{equation*}
which implies that there exists pretty good state transfer
between $(u,0)$ and $(v,0)$ in $G \star H$.
\end{proof}
\end{thm}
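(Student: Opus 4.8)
The plan is to evaluate $\ebra{(u,0)}\, e^{-\ii tA(G\star H)}\, \eket{(v,0)}$ directly from the exact formula of Proposition~\ref{prop::3.3}(\ref{item::prop::3.3::1}) and to exhibit a sequence of times $t$ at which that expression is, up to an arbitrarily small error, a unimodular multiple of $\ebra{u}\, e^{-\ii(\pi/g)A(G)}\, \eket{v}$; since $G$ has perfect state transfer between $u$ and $v$ at time $\pi/g$, the latter has modulus $1$, and pretty good state transfer between $(u,0)$ and $(v,0)$ follows. The key observation is that in that formula only the eigenvalues $\lambda\in\Phi_u$ contribute, because $\eigenA{\lambda}{G}\eket{u}=0$, hence $\ebra{u}\eigenA{\lambda}{G}\eket{v}=0$, for $\lambda\notin\Phi_u$, so there are only finitely many terms to control; for each such $\lambda$ I want to arrange $\cos(t\Lambda_\lambda/2)\approx 1$, $\sin(t\Lambda_\lambda/2)\approx 0$, and $e^{-\ii t(\lambda+k)/2}=e^{-\ii(\pi/g)k}\, e^{-\ii(\pi/g)\lambda}$ exactly.

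The arithmetic preparation is the core. Because $G$ has perfect state transfer at the rational time $\pi/g$, Theorem~\ref{thm::2.1} forces every eigenvalue in $\Phi_u$ to be an integer: if the eigenvalues had the genuinely quadratic form $\tfrac12(a+b_r\sqrt{\Delta})$ with a square-free $\Delta>1$, then the minimum transfer time, and hence every transfer time, would be an irrational multiple of $\pi$. Thus $(\lambda-k)^2+4m\lambda^2$ is a positive integer for each $\lambda\in\Phi_u$; write it as $s_\lambda^2 c_\lambda$ with $c_\lambda$ square-free, so that $\Lambda_\lambda=s_\lambda\sqrt{c_\lambda}$. Using the hypotheses $k\neq 0$ and $0\notin\Phi_u$, one then shows $c_\lambda>1$ for every $\lambda\in\Phi_u$, so that $\{\sqrt{c_\lambda}:\lambda\in\Phi_u\}\cup\{1\}$ is linearly independent over $\Q$ by Corollary~\ref{cor::2.6}. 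Kronecker's theorem (Theorem~\ref{thm::2.4}) then supplies integers $\ell$ (arbitrarily large) and $q_\lambda$ with $\ell\sqrt{c_\lambda}-q_\lambda\approx -\sqrt{c_\lambda}/(2g)$ simultaneously for all $\lambda\in\Phi_u$; multiplying through by $4s_\lambda$ yields $(4\ell+2/g)\Lambda_\lambda\approx 4q_\lambda s_\lambda$.

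Now take $t=(4\ell+2/g)\pi$. Then $t\Lambda_\lambda/2\approx 2\pi q_\lambda s_\lambda$, so $\cos(t\Lambda_\lambda/2)\approx 1$ and $\sin(t\Lambda_\lambda/2)\approx 0$ for every $\lambda\in\Phi_u$; since also $|\lambda-k|/\Lambda_\lambda\leq 1$, the factor $\cos(t\Lambda_\lambda/2)-\tfrac{\lambda-k}{\Lambda_\lambda}\ii\sin(t\Lambda_\lambda/2)$ lies within an arbitrarily small error of $1$. Moreover $e^{-\ii t(\lambda+k)/2}=e^{-2\pi\ii\ell(\lambda+k)}e^{-\ii(\pi/g)(\lambda+k)}=e^{-\ii(\pi/g)k}e^{-\ii(\pi/g)\lambda}$, because $\lambda+k$ and $\ell$ are integers. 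Substituting into Proposition~\ref{prop::3.3}(\ref{item::prop::3.3::1}) and bounding the finitely many error terms, each by a fixed multiple of $|\ebra{u}\eigenA{\lambda}{G}\eket{v}|$ times the error, we get
\[
\left|\ebra{(u,0)}\, e^{-\ii tA(G\star H)}\, \eket{(v,0)}\right|
\ \approx\ \left|e^{-\ii(\pi/g)k}\sum_{\lambda}e^{-\ii(\pi/g)\lambda}\,\ebra{u}\eigenA{\lambda}{G}\eket{v}\right|
= \left|\ebra{u}\, e^{-\ii(\pi/g)A(G)}\, \eket{v}\right| = 1,
\]
the last step being the perfect-state-transfer hypothesis in $G$. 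Since the approximation can be taken as tight as we wish and $\ell$ as large as we wish, this is exactly pretty good state transfer between $(u,0)$ and $(v,0)$.

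I expect the main obstacle to be the arithmetic step: proving that perfect state transfer at the rational time $\pi/g$ genuinely pins $\Phi_u$ to the integers (this uses the minimum-time statement of Theorem~\ref{thm::2.1}, not just its classification of admissible eigenvalues), and, more delicately, proving that $(\lambda-k)^2+4m\lambda^2$ is never a perfect square for $\lambda\in\Phi_u$, i.e.\ that $\Lambda_\lambda$ is irrational. It is precisely here that the hypotheses $k\neq 0$ and $0\notin\Phi_u$ must be used, since if some $\sqrt{c_\lambda}$ equalled $1$ it would be $\Q$-dependent with $1$ and Kronecker's theorem could not be applied in the form above; once this irrationality is secured, the remaining steps are the routine estimates sketched above.
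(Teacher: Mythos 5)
Your plan coincides essentially step for step with the paper's own proof: use Theorem \ref{thm::2.1} to force $\Phi_{u}\subseteq\Z$, write $\Lambda_\lambda=s_\lambda\sqrt{c_\lambda}$ with $c_\lambda$ square-free, invoke Corollary \ref{cor::2.6} and Kronecker's theorem with target $-\sqrt{c_\lambda}/(2g)$, take $t=(4\ell+2/g)\pi$, and collapse the sum in Proposition \ref{prop::3.3}(\ref{item::prop::3.3::1}) to $\bigl|\ebra{u}e^{-\ii(\pi/g)A(G)}\eket{v}\bigr|=1$. Your observation that only $\lambda\in\Phi_{u}$ contribute, and your exact phase computation $e^{-\ii t(\lambda+k)/2}=e^{-\ii(\pi/g)k}e^{-\ii(\pi/g)\lambda}$, are exactly what the paper does.

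The step you defer, however, is a genuine gap, and it is the same gap the paper leaves open: the claim that $c_\lambda>1$, i.e.\ that $(\lambda-k)^2+4m\lambda^2$ is never a perfect square for $\lambda\in\Phi_{u}$, is simply asserted in the paper (``since $0\notin\Phi_{u}$, $\Lambda_\lambda$ is irrational''), and it does not follow from the stated hypotheses $k\neq0$ and $0\notin\Phi_{u}$. Concretely, take $G=K_2$, which has perfect state transfer between its two vertices at time $\pi/2$ with $\Phi_{u}=\{1,-1\}$ (so $0\notin\Phi_{u}$), and $H=C_6$ (connected, $k=2$, $m=6$): for $\lambda=1$ one gets $(\lambda-k)^2+4m\lambda^2=25$, so $\Lambda_1=5$ and $c_1=1$. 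Then $\{\sqrt{c_\lambda}:\lambda\in\Phi_{u}\}\cup\{1\}$ is not linearly independent over $\Q$ and Kronecker's theorem cannot be applied in the form you (and the paper) use; worse, at your times $t=(4\ell+1)\pi$ one has $\cos(\Lambda_1 t/2)=\cos\bigl((20\ell+5)\pi/2\bigr)=0$, so the $\lambda=1$ term is damped by the factor $|\lambda-k|/\Lambda_1=1/5$ and the displayed chain of approximations breaks down. To close the proof one must either add a hypothesis excluding such coincidences (no $(\lambda-k)^2+4m\lambda^2$ a perfect square for $\lambda\in\Phi_u$) or give a separate argument handling the eigenvalues with integral $\Lambda_\lambda$; neither your sketch nor the paper supplies this, so flagging it as ``the main obstacle'' is accurate but leaves the theorem unproved as stated.
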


\begin{thm} \label{thm::5.2}
Let $G$ be a connected graph having zero as an eigenvalue and $H$ be a connected $k$-regular graph $(k \neq 0)$ on $m$ vertices. Suppose that $G$ has perfect state transfer at time
$\pi/2$ between vertices $u$ and $v$. Then there is pretty good state transfer between
$(u,0)$ and $(v,0)$ in the neighborhood corona $G \star H$.

\begin{proof}
Similar to the proof of Theorem \ref{thm::5.1}, for each $\lambda \in \Phi_{u}$, we can write $\Lambda_\lambda = s_\lambda \sqrt{\mathstrut c_\lambda}$ where
$c_\lambda$ is the square-free part of $(\lambda - k)^2 + 4 m \lambda^2$ and $s_\lambda$ is an integer.
Note that $c_\lambda = 1$ if and only if $\lambda=0$.
The set $\{ \sqrt{\mathstrut c_\lambda} : \lambda \in \Phi_{u}, \lambda\neq 0\} \cup \{1\}$
is linearly independent over $\mathbb Q$.

For $\lambda \neq 0$, by Theorem \ref{thm::2.4}, this implies that there exist integers $l$ and $q_\lambda$ such that
\begin{equation*}
	\ell \sqrt{\mathstrut c_\lambda} - q_\lambda \ \approx \ -\frac{\sqrt{\mathstrut c_\lambda}}{4}+\frac{1}{2s_\lambda}.
\end{equation*}
If $t=(4\ell +1)\pi$, then  $\cos(\Lambda_0 t/2)=-1$,
and $\cos(\Lambda_\lambda t/2) \approx -1$ for $\lambda \neq 0$.
By Proposition \ref{prop::3.3}(\ref{item::prop::3.3::1}), we have
\begin{equation*}
\begin{split}
\left|\ebra{(u,0)} e^{-\ii tA(G\star H)} \eket{(v,0)}\right|
	& = \left|\sum_{\lambda \in \Sp(G)} e^{-\ii t(\lambda+k)/2} \left(\cos(\Lambda_\lambda t/2)
	- \frac{\lambda-k}{\Lambda_\lambda} \ii \sin(\Lambda_\lambda t/2) \right) \ebra{u} \eigenA{\lambda}{G} \eket{v}\right| \\
	& \approx\left| - \sum_{\lambda \in \Sp(G)} e^{- \ii (2\pi) \ell \lambda} e^{- \ii t k/2}
e^{-\ii(\pi/2)\lambda}
		\ebra{u}\eigenA{\lambda}{G} \eket{v} \right|\\
	& =\left| - e^{- \ii t k/2} \ebra{u} e^{-\ii (\pi/2) A(G)} \eket{v}\right|\\
    & =\left| - e^{- \ii (\pi/2) k}  \ebra{u} e^{-\ii (\pi/2) A(G)} \eket{v}\right|\\
    & =1,
\end{split}
\end{equation*}
which implies that there exists pretty good state transfer between $(u,0)$ and $(v,0)$ in $G \star H$.
\end{proof}
\end{thm}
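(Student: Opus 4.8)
The plan is to mirror the argument of Theorem \ref{thm::5.1}, with the crucial modification that the eigenvalue $0$ of $G$ now plays a special role because, for $\lambda = 0$, one has $\Lambda_0 = \sqrt{k^2} = k$, which is an \emph{integer} rather than an irrational. So the eigenvalue $0$ cannot be lumped into the Kronecker approximation; instead it must be handled by hand, and the target phase shift $\cos(\Lambda_\lambda t/2) \approx -1$ (rather than $\approx +1$) must be chosen precisely so that the $\lambda=0$ term \emph{exactly} contributes $-1$ while all the other terms are forced close to $-1$ simultaneously.

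First I would set up the square-free decomposition exactly as before: since $G$ has perfect state transfer at time $\pi/2$ between $u$ and $v$, Theorem \ref{thm::2.1} forces every eigenvalue in $\Phi_u$ to be an integer, and for each $\lambda \in \Phi_u$ I write $(\lambda-k)^2 + 4m\lambda^2 = s_\lambda^2 c_\lambda$ with $c_\lambda$ square-free, so that $\Lambda_\lambda = s_\lambda\sqrt{c_\lambda}$. The key observation is that $c_\lambda = 1$ precisely when $\lambda = 0$ (when $\lambda\neq 0$, $4m\lambda^2$ with $k\neq 0$ makes $(\lambda-k)^2+4m\lambda^2$ a non-square — this is the same reasoning as the ``$c_\lambda > 1$'' step in the previous proof, and uses $k \neq 0$). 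Hence $\{\sqrt{c_\lambda} : \lambda \in \Phi_u, \lambda \neq 0\} \cup \{1\}$ is linearly independent over $\Q$ by Corollary \ref{cor::2.6}.

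Next I would invoke Kronecker's theorem (Theorem \ref{thm::2.4}) on this independent set to produce integers $\ell, q_\lambda$ with $\ell\sqrt{c_\lambda} - q_\lambda \approx -\tfrac{\sqrt{c_\lambda}}{4} + \tfrac{1}{2s_\lambda}$ for each nonzero $\lambda$; multiplying through by $2s_\lambda$ gives $(2\ell + \tfrac12)\Lambda_\lambda \approx 2q_\lambda s_\lambda + 1$, i.e. $\Lambda_\lambda t/2 \approx (2q_\lambda s_\lambda + 1)\pi$ when $t = (4\ell+1)\pi$, so $\cos(\Lambda_\lambda t/2) \approx -1$ and $\sin(\Lambda_\lambda t/2)\approx 0$ for $\lambda\neq 0$. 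Simultaneously, for $\lambda = 0$: $\Lambda_0 t/2 = k(4\ell+1)\pi/2$. Here I need to be slightly careful — the value of $\cos(\Lambda_0 t/2)$ depends on the parity of $k$; if $k$ is even this is $\cos(\text{even}\cdot\pi) = 1$, not $-1$. I would either restrict attention to the case handled (the paper's computation asserts $\cos(\Lambda_0 t/2) = -1$, which holds when $k$ is odd), or — better — absorb the $\lambda=0$ term honestly: since $\sin(\Lambda_0 t/2)=0$ regardless and the $\lambda=0$ contribution to Proposition \ref{prop::3.3}(\ref{item::prop::3.3::1}) is $e^{-\ii t k/2}\cos(\Lambda_0 t/2)\,\ebra{u}\eigenA{0}{G}\eket{v}$, while the $\lambda\neq0$ terms approximate $e^{-\ii t(\lambda+k)/2}(-1)\ebra{u}\eigenA{\lambda}{G}\eket{v}$, I would verify that with $t = (4\ell+1)\pi$ one has $e^{-\ii t(\lambda+k)/2} = e^{-\ii(2\pi)\ell\lambda}\cdot e^{-\ii t k/2}\cdot e^{-\ii(\pi/2)\lambda} = e^{-\ii tk/2}e^{-\ii(\pi/2)\lambda}$ since $e^{-\ii 2\pi\ell\lambda}=1$ (as $\lambda, \ell \in \Z$). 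Pulling out the common $-e^{-\ii tk/2}$ and recognizing $\sum_\lambda e^{-\ii(\pi/2)\lambda}\eigenA{\lambda}{G} = e^{-\ii(\pi/2)A(G)} = U_G(\pi/2)$, the sum collapses to $\big|{-}e^{-\ii tk/2}\,\ebra{u}U_G(\pi/2)\eket{v}\big| = |U_G(\pi/2)_{u,v}| = 1$ by the perfect state transfer hypothesis on $G$.

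The main obstacle is the interplay between the $\lambda = 0$ term and the sign/parity issue: the computation displayed in the statement implicitly wants $\cos(\Lambda_0 t/2) = -1$, which needs $k$ odd, so the honest proof either needs this hypothesis or needs to carry the $\lambda=0$ term with its own sign $\cos(k(4\ell+1)\pi/2)\in\{-1,0,1\}$ and then choose the Kronecker target so that the nonzero terms match \emph{that} sign (and handle $k \equiv 2 \pmod 4$, where $\cos = 1$, versus $k$ odd where it can be made $-1$ by the parity of $(4\ell+1)$, noting $4\ell+1$ is odd so $\cos(k(4\ell+1)\pi/2)=\cos(k\pi/2)$). Once the sign bookkeeping is pinned down, the rest is the same routine ``factor out $e^{-\ii 2\pi\ell\lambda}=1$, recognize the transition matrix of $G$, apply PST on $G$'' manipulation as in Theorem \ref{thm::5.1}, and the triangle-inequality slack vanishes because $|\cos - (\text{target})| \to 0$ and $|\sin| \to 0$ uniformly over the finitely many $\lambda \in \Phi_u$.
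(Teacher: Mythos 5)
You follow the paper's own route step for step (same square\mbox{-}free decomposition $\Lambda_\lambda=s_\lambda\sqrt{c_\lambda}$, same Kronecker target, same time family $t=(4\ell+1)\pi$, same collapse of the sum to $\ebra{u}e^{-\ii(\pi/2)A(G)}\eket{v}$), so the comparison comes down to the one place where you deviate: the treatment of $\lambda=0$. You are right that this is the delicate spot --- the paper's assertion that $\cos(\Lambda_0 t/2)=-1$ is a parity statement, since $\Lambda_0=k$ --- but your own parity bookkeeping is wrong. Because $4\ell+1$ is odd, $\cos(\Lambda_0 t/2)=\cos(k\pi/2)$, which equals $0$ when $k$ is odd (not $-1$, as you assert), equals $-1$ when $k\equiv 2\pmod{4}$ (not $+1$), and equals $+1$ when $k\equiv 0\pmod{4}$; likewise $\sin(\Lambda_0 t/2)=\pm1$ for odd $k$, so your claims that the sine vanishes ``regardless'' and that the $\lambda=0$ contribution is $e^{-\ii tk/2}\cos(\Lambda_0 t/2)\,\ebra{u}\eigenA{0}{G}\eket{v}$ are false precisely in the cases you set out to rescue. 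The clean fact you are missing is that the $\lambda=0$ term of Proposition \ref{prop::3.3}(\ref{item::prop::3.3::1}) is $e^{-\ii tk/2}\bigl(\cos(kt/2)+\ii\sin(kt/2)\bigr)\ebra{u}\eigenA{0}{G}\eket{v}=\ebra{u}\eigenA{0}{G}\eket{v}$ \emph{exactly}, independent of $t$.

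Once that is seen, the real constraint appears: at $t=(4\ell+1)\pi$ each nonzero $\lambda$ in the support contributes approximately $\epsilon\, e^{-\ii\pi k/2}e^{-\ii(\pi/2)\lambda}\ebra{u}\eigenA{\lambda}{G}\eket{v}$, where $\epsilon=\pm1$ is the common sign you impose via Theorem \ref{thm::2.4}; perfect state transfer at $\pi/2$ forces $e^{-\ii(\pi/2)\lambda}\ebra{u}\eigenA{\lambda}{G}\eket{v}=\gamma\,|\ebra{u}\eigenA{\lambda}{G}\eket{v}|$ for all $\lambda$ in the support, with $\gamma$ real when $0\in\Phi_u$, so alignment with the fixed $\lambda=0$ term requires $\epsilon e^{-\ii\pi k/2}=1$. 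This is achievable exactly when $k$ is even: take $\epsilon=-1$ for $k\equiv 2\pmod{4}$ (the case the paper's displayed computation actually proves) and $\epsilon=+1$ for $k\equiv 0\pmod{4}$. For odd $k$, the nonzero terms sit in quadrature with the $\lambda=0$ term at every time of this form, so the modulus stays bounded away from $1$; the ``sign bookkeeping'' you defer cannot be pinned down within this scheme, and a genuinely different time family or an extra hypothesis on $k$ would be needed. (Separately, the claim $c_\lambda=1$ iff $\lambda=0$, which you take over from the paper with a one-line justification, is not actually proved and can fail, e.g.\ $\lambda=k=2$, $m=4$ gives $(\lambda-k)^2+4m\lambda^2=64$; that step needs an argument in both write-ups.) So your proposal identifies the right weak point but does not close it, and its provisional classification of which $k$ are problematic is incorrect.
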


%
%
%
%
%
From the above conclusion, we find that if $G$ has perfect state transfer, then $G \star H$ has pretty good state transfer. The following suggests that if there exists no perfect state transfer in $G$ can obtain the existence of  pretty good state transfer in $G \star H$.


\begin{lem}[\cite{cggv15}, Lemma 4.4]  \label{lem::5.4}
Let $G$ be a distance-regular graph of diameter $d$ with eigenvalues $\lambda_0 > \lambda_1 > \cdots > \lambda_d$.
Let the spectral decomposition of the adjacency matrix of $G$ be given by
$A(G) = \sum_{j=0}^d \lambda_j \eigenA{j}{G}$.
Suppose that $G$ is antipodal with classes of size two. 
Then
\begin{equation*}
	A_d(G) \eigenA{j}{G} = (-1)^j \eigenA{j}{G}.
\end{equation*}
Here, $A_d(G)$ is the adjacency matrix of a graph obtained from $G$ by connecting vertices $u$ and $v$ if and only if they are at distance $d$.
\end{lem}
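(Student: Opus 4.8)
The plan is to reduce the statement to a purely numerical fact about the spectrum of $G$ and its distance polynomials, and then to settle that fact by orthogonal-polynomial interlacing. Since $G$ is distance-regular of diameter $d$, there are polynomials $v_0,v_1,\dots ,v_d$ with $\deg v_i=i$, determined by $v_0=1$, $v_1(x)=x$ and the three-term recurrence $c_{i+1}v_{i+1}(x)=(x-a_i)v_i(x)-b_{i-1}v_{i-1}(x)$ built from the intersection numbers, such that $A_i(G)=v_i(A(G))$ for every $i$; in particular $A_d(G)=v_d(A(G))$. Because $A_d(G)$ is a polynomial in $A(G)$, it acts as a scalar on each eigenspace, namely $A_d(G)\eigenA{j}{G}=v_d(\lambda_j)\eigenA{j}{G}$, so the lemma is equivalent to the claim that $v_d(\lambda_j)=(-1)^j$ for $j=0,1,\dots ,d$.

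Next I would feed in the hypothesis. That $G$ is antipodal with classes of size two says precisely that each vertex has a unique vertex at distance $d$ from it, so $A_d(G)$ is the $\{0,1\}$-adjacency matrix of a perfect matching. Hence $A_d(G)^2=I$, which forces $v_d(\lambda_j)^2=1$ and thus $v_d(\lambda_j)\in\{+1,-1\}$; and every row sum of $A_d(G)$ is $1$, so $A_d(G)$ fixes the all-ones vector, which spans $\eigenA{\lambda_0}{G}$, giving $v_d(\lambda_0)=1=(-1)^0$. It remains to prove that the signs of $v_d(\lambda_0),\dots ,v_d(\lambda_d)$ alternate.

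For the alternation I would exploit the orthogonal-polynomial structure of the $v_i$. After rescaling them to be monic, the recurrence takes Favard form $\widehat v_{i+1}=(x-a_i)\widehat v_i-b_{i-1}c_i\,\widehat v_{i-1}$ with all coefficients $b_{i-1}c_i>0$, so $\widehat v_0,\dots ,\widehat v_d$ is an orthogonal polynomial sequence; moreover the degree-$(d+1)$ polynomial $(x-a_d)v_d(x)-b_{d-1}v_{d-1}(x)$ annihilates $A(G)$ (a vertex at distance $d$ has neighbours only at distances $d-1$ and $d$), so it is a nonzero scalar multiple of the minimal polynomial $\prod_{j=0}^d (x-\lambda_j)$ and serves as $\widehat v_{d+1}$. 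By the classical interlacing theorem for consecutive orthogonal polynomials, the $d$ zeros of $\widehat v_d$, equivalently of $v_d$, strictly interlace the $d+1$ zeros $\lambda_0>\cdots >\lambda_d$ of $\widehat v_{d+1}$: exactly one zero of $v_d$ lies in each interval $(\lambda_j,\lambda_{j-1})$ and none lies outside $[\lambda_d,\lambda_0]$. Therefore $v_d(\lambda_{j-1})$ and $v_d(\lambda_j)$ have opposite signs for each $j$, so starting from $v_d(\lambda_0)=1$ we get that the sign of $v_d(\lambda_j)$ is $(-1)^j$; combined with $v_d(\lambda_j)\in\{\pm 1\}$ this yields $v_d(\lambda_j)=(-1)^j$, i.e. $A_d(G)\eigenA{j}{G}=(-1)^j\eigenA{j}{G}$.

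The step that needs the most care is this alternation argument: the distance polynomials stop at degree $d$ and $c_{d+1}$ is undefined, so one must identify $\widehat v_{d+1}$ with the minimal polynomial by hand (and note that its zeros are automatically the $d+1$ real, simple numbers $\lambda_j$) before invoking interlacing. An equivalent route avoiding the language of orthogonal polynomials is to observe that $\bigl(v_0(\lambda_j),\dots ,v_d(\lambda_j)\bigr)$ is an eigenvector, for the eigenvalue $\lambda_j$, of the tridiagonal intersection matrix of $G$, and then to use the Sturm-oscillation fact that the eigenvector belonging to the $j$-th largest eigenvalue of such a matrix has exactly $j$ sign changes; since its first coordinate $v_0(\lambda_j)=1$ is positive, its last coordinate $v_d(\lambda_j)$ has sign $(-1)^j$. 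Either way, the magnitude-one part of the conclusion is exactly where the assumption of classes of size two — via $A_d(G)^2=I$ — is genuinely used.
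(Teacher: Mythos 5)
Your proposal is correct, and it actually supplies more than the paper does: the paper states this lemma purely as a citation of \cite{cggv15} (Lemma 4.4) and gives no proof of its own. Your argument is sound at every step: $A_d(G)=v_d(A(G))$ for the degree-$d$ distance polynomial, so $A_d(G)$ acts on the eigenspace of $\lambda_j$ as the scalar $v_d(\lambda_j)$; antipodality with classes of size two makes $A_d(G)$ the adjacency matrix of a perfect matching, so $A_d(G)^2=I$ forces $v_d(\lambda_j)=\pm 1$, and $A_d(G)\mathbf{j}=\mathbf{j}$ together with the simplicity of $\lambda_0$ gives $v_d(\lambda_0)=1$; and the sign alternation follows from strict interlacing of the zeros of $v_d$ with those of the degree-$(d+1)$ polynomial $(x-a_d)v_d-b_{d-1}v_{d-1}$, which you correctly identify as a nonzero multiple of the minimal polynomial (this identification, which you flag as the delicate point, is exactly right, and the positivity $b_{i-1}c_i>0$ you need for the Favard/interlacing step is guaranteed by the intersection numbers). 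Your alternative route via the tridiagonal intersection matrix and the Sturm oscillation property of its eigenvectors is in fact the standard textbook argument (it is essentially the sign-change property of the standard sequence in Brouwer--Cohen--Neumaier used in the source \cite{cggv15}), so either version of your write-up is a legitimate, self-contained replacement for the citation; the interlacing version buys independence from the association-scheme machinery at the cost of a short digression on orthogonal polynomials.
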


\begin{thm}
Let H be a $k$-regular connected graph $(k \neq 0)$ on $m$ vertices, $n \geqslant 3$ be an odd integer, and $u$ and $v$ be antipodal vertices of the cocktail party graph $\overline{nK_2}$.
Then there is pretty good state transfer between $(u,0)$ and $(v,0)$ in $\overline{nK_2} \star H$.
\end{thm}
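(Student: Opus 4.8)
The plan is to evaluate $\ebra{(u,0)}e^{-\ii tA(G\star H)}\eket{(v,0)}$ in closed form via Proposition~\ref{prop::3.3}(\ref{item::prop::3.3::1}) and then force its modulus to $1$ with Kronecker's theorem. First I would record the spectral data of $G=\overline{nK_2}$: it is connected, $(2n-2)$-regular, distance-regular of diameter $2$ and antipodal with classes of size $2$, with distinct eigenvalues $\lambda_0=2n-2>\lambda_1=0>\lambda_2=-2$ of multiplicities $1,\,n,\,n-1$, and its distance-$2$ graph is the perfect matching, so $A_2(G)\eket{u}=\eket{v}$. Since $G$ is vertex-transitive, $\ebra{u}\eigenA{\lambda_j}{G}\eket{u}$ equals (multiplicity of $\lambda_j$)$/(2n)$; and since $A_2(G)$ is a polynomial in $A(G)$ it commutes with each $\eigenA{\lambda_j}{G}$, so Lemma~\ref{lem::5.4} gives $A_2(G)\eigenA{\lambda_j}{G}=(-1)^j\eigenA{\lambda_j}{G}$ and hence $\eigenA{\lambda_j}{G}\eket{v}=(-1)^j\eigenA{\lambda_j}{G}\eket{u}$. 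Combining,
\[
\ebra{u}\eigenA{2n-2}{G}\eket{v}=\tfrac1{2n},\qquad \ebra{u}\eigenA{0}{G}\eket{v}=-\tfrac12,\qquad \ebra{u}\eigenA{-2}{G}\eket{v}=\tfrac{n-1}{2n}.
\]

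Writing $\Lambda_\lambda=\sqrt{(\lambda-k)^2+4m\lambda^2}$, Proposition~\ref{prop::3.3}(\ref{item::prop::3.3::1}) writes $\ebra{(u,0)}e^{-\ii tA(G\star H)}\eket{(v,0)}$ as the sum over $\lambda\in\{2n-2,0,-2\}$ of $e^{-\ii t(\lambda+k)/2}\ebra{u}\eigenA{\lambda}{G}\eket{v}\bigl(\cos\tfrac{t\Lambda_\lambda}2-\tfrac{\lambda-k}{\Lambda_\lambda}\ii\sin\tfrac{t\Lambda_\lambda}2\bigr)$. For $\lambda=0$ one has $\Lambda_0=k$ and $e^{-\ii tk/2}(\cos\tfrac{tk}2+\ii\sin\tfrac{tk}2)=1$, so that term is exactly $-\tfrac12$. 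Hence, with $\Lambda_+:=\sqrt{(2n-2-k)^2+4m(2n-2)^2}$ and $\Lambda_-:=\sqrt{(k+2)^2+16m}$,
\[
\ebra{(u,0)}e^{-\ii tA(G\star H)}\eket{(v,0)}=-\tfrac12+\tfrac1{2n}\,T_+(t)+\tfrac{n-1}{2n}\,T_-(t),
\]
\[
T_+(t)=e^{-\ii t(2n-2+k)/2}\Bigl(\cos\tfrac{t\Lambda_+}2-\tfrac{2n-2-k}{\Lambda_+}\ii\sin\tfrac{t\Lambda_+}2\Bigr),\qquad T_-(t)=e^{-\ii t(k-2)/2}\Bigl(\cos\tfrac{t\Lambda_-}2+\tfrac{k+2}{\Lambda_-}\ii\sin\tfrac{t\Lambda_-}2\Bigr).
\]

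Since $m\ge1$ and $2n-2\ge4>0$, the coefficients $\tfrac{|2n-2-k|}{\Lambda_+},\tfrac{k+2}{\Lambda_-}$ are strictly less than $1$, so $|T_\pm(t)|\le1$, giving $|\ebra{(u,0)}e^{-\ii tA(G\star H)}\eket{(v,0)}|\le1$ with equality forcing $T_+(t)=T_-(t)=-1$; moreover $|T_\pm(t)|\to1$ is only possible if $\sin(t\Lambda_\pm/2)\to0$. It therefore suffices to find arbitrarily large $t$ with $t\Lambda_\pm/2$ near integer multiples of $\pi$ and $t(2n-2+k)/2,\,t(k-2)/2$ near integer multiples of $\pi$ of the opposite parity. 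I would take $t=(4\ell+2)\pi$ with $\ell$ a large integer: as $2n-2$ is even, $e^{-\ii t(2n-2+k)/2}=(-1)^{(2\ell+1)(2n-2+k)}=(-1)^k=e^{-\ii t(k-2)/2}$ exactly, so it only remains to choose $\ell$ so that $(2\ell+1)\Lambda_+$ and $(2\ell+1)\Lambda_-$ both lie near integers of parity $k+1$. Writing $\Lambda_\pm=s_\pm\sqrt{c_\pm}$ with $c_\pm$ square-free, in the principal case $c_+,c_->1$ and $c_+\ne c_-$ Corollary~\ref{cor::2.6} shows $\{1,\Lambda_+,\Lambda_-\}$ is linearly independent over $\Q$, so Theorem~\ref{thm::2.4} yields a large $\ell$ and integers $q_\pm$ with $\ell\Lambda_\pm-q_\pm\approx\tfrac{k+1}2-\tfrac{\Lambda_\pm}2$; then $(2\ell+1)\Lambda_\pm\approx 2q_\pm+k+1$, so $\cos(t\Lambda_\pm/2)\to(-1)^{k+1}$, $\sin(t\Lambda_\pm/2)\to0$, whence $T_\pm(t)\to(-1)^k(-1)^{k+1}=-1$ and $|\ebra{(u,0)}e^{-\ii tA(G\star H)}\eket{(v,0)}|\to1$, which is the asserted pretty good state transfer.

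The main obstacle is the degenerate case analysis. If $c_+=c_-$ then $\Lambda_+,\Lambda_-$ are rationally dependent and one must apply Theorem~\ref{thm::2.4} to the single surd $\sqrt{c_+}$ while satisfying a congruence tying $q_+$ to $q_-$; if $\Lambda_+$ or $\Lambda_-$ is rational (hence a positive integer) then the times admissible in the reduction above are confined to a lattice of the form $\tfrac{2\pi}{G}\Z$ with $G=\gcd$ of the relevant integers, and one must verify by a direct parity computation that $T_\pm$ can still be brought to $-1$ on that lattice. Organizing these sub-cases — and, where it is needed, exploiting the hypothesis that $n$ is odd (equivalently $2n-2\equiv0\pmod4$) to make the parity bookkeeping work out — is the delicate part; the principal case above furnishes the template that the other cases refine.
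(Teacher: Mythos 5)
Your principal case is, in substance, the paper's own proof: the same use of Lemma \ref{lem::5.4} to get $\ebra{u}\eigenA{j}{\overline{nK_2}}\eket{v}=(-1)^j\ebra{u}\eigenA{j}{\overline{nK_2}}\eket{u}$, the same reduction through Proposition \ref{prop::3.3}(\ref{item::prop::3.3::1}) with the $\lambda=0$ term frozen at $-\tfrac12$, and the same Kronecker step via Corollary \ref{cor::2.6} to push the two oscillatory factors $T_\pm$ to $-1$. Your time choice $t=(4\ell+2)\pi$ with targets keyed to the parity of $k$ is actually tidier than the paper's $t=8\ell\pi$, since the paper additionally asserts, without justification, that $s_0,s_2$ are odd, whereas you only need $\{1,\Lambda_+,\Lambda_-\}$ linearly independent over $\Q$.

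The genuine gap is the degenerate cases you defer, and it is not bookkeeping that can be "refined": the statement itself fails there, so no case analysis will close it. Take $n=3$ and $H=K_3$ (so $k=2$, $m=3$). Then $\Lambda_+=\sqrt{(2n-2-k)^2+16m(n-1)^2}=14$ and $\Lambda_-=\sqrt{(k+2)^2+16m}=8$, every eigenvalue in the supports of $(u,0)$ and $(v,0)$ is an integer, and the amplitude $f(t)=-\tfrac12+\tfrac{1}{2n}T_+(t)+\tfrac{n-1}{2n}T_-(t)$ is continuous and $2\pi$-periodic. By your own alignment argument, $|f(t)|=1$ would force $T_+(t)=T_-(t)=-1$. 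But here $T_+(t)=e^{-3\ii t}\bigl(\cos 7t-\tfrac17\ii\sin 7t\bigr)$, so $T_+(t)=-1$ requires $\sin 7t=0$ and $e^{-3\ii t}\cos 7t=-1$, hence $7t\in\pi\Z$ and $3t\in\pi\Z$, so $t=7t-2(3t)\in\pi\Z$; at $t=j\pi$ one gets $e^{-3\ii t}\cos 7t=(-1)^j(-1)^j=1\neq-1$. Thus $|f(t)|<1$ for every $t$, and by periodicity $\sup_t|f(t)|<1$: there is no pretty good (indeed no perfect) state transfer, even though all hypotheses of the theorem hold. So the theorem is only true under extra assumptions that exclude precisely your degenerate cases (for instance, that $(2n-2-k)^2+16m(n-1)^2$ and $(k+2)^2+16m$ are non-squares with distinct square-free parts); the paper's proof has the same unacknowledged hole, since it never verifies $c_0,c_2>1$, $c_0\neq c_2$, or the claimed oddness of $s_0,s_2$. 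Your instinct that this is "the delicate part" is right, but the honest conclusion is that it cannot be completed as stated, only under additional hypotheses.
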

\begin{proof}
The eigenvalues of $\overline{nK_2}$ are
$\lambda_0 = 2n-2$, $\lambda_1 = 0$, and $\lambda_2 = -2$. If $u$ and $v$ are antipodal vertices of $\overline{nK_2}$,
by Lemma \ref{lem::5.4}, then we have $\eigenA{j}{\overline{nK_2}}=(-1)^{j}A_2 \eigenA{j}{\overline{nK_2}}$. Because $A_2$ is a permutation
matrix, then we have
\begin{equation} \label{eq::39}
	\ebra{u} \eigenA{j}{\overline{nK_2}} \eket{v} = (-1)^j \ebra{u} \eigenA{j}{\overline{nK_2}} \eket{u}
\end{equation}
for $j = 0,1,2$. By Proposition \ref{prop::3.3}(\ref{item::prop::3.3::1}),
$\Lambda_{j} = \sqrt{(\lambda_{j}-k)^{2} + 4 m \lambda_{j}^{2}}$. Now we pick  $t$ such that it suffices to approximate
\begin{equation} \label{eq::40}
e^{-\ii t(\lambda_j+k)/2} \approx 1,
\end{equation}
and
\begin{equation} \label{eq::41}
	\cos\left(\Lambda_j t/2\right) \approx (-1)^{j+1}.
\end{equation}

Let $t=8\ell\pi$ with $\ell \in \mathbb{Z}$. Then $e^{-\ii t(\lambda_j+k)/2} = 1$ and $\cos(\Lambda_1 t /2) = 1$.  Also note that
$\Lambda_0 = \sqrt{(2n-2-k)^2+16m(n-1)^2}$ and
$\Lambda_2 = \sqrt{(k+2)^2+16m}$. We apply Theorem \ref{thm::2.4} to $\Lambda_0$ and $\Lambda_2$. Let $c_0$ and $c_2$ denote the square-free part of $\Lambda_0^2$ and $\Lambda_2^2$. Then $\Lambda_0 = s_0\sqrt{\mathstrut c_0}$ and  $\Lambda_2 = s_2\sqrt{\mathstrut c_2}$ for some odd integer $s_0$ and $s_2$. We have
\begin{align*}
	\ell \sqrt{\mathstrut c_0} - q_0 \approx \frac{1}{4}, \\
	\ell \sqrt{\mathstrut c_2} - q_2 \approx \frac{1}{4}.
\end{align*}
At time $t = 8\ell\pi $, we have $t \Lambda_0/2 \approx 4\pi q_0 s_0 + \pi s_0$ and $t\Lambda_2/2 \approx 4\pi q_2 s_2 + \pi s_2$,
which  leads to \eqref{eq::41}.
Form \eqref{eq::39}, \eqref{eq::40}, \eqref{eq::41} and Proposition \ref{prop::3.3}(\ref{item::prop::3.3::1}), we have
\begin{equation*}
	\begin{split}
		\left|\ebra{(u,0)} e^{-\ii tA(\overline{nK_2}\star H)} \eket{(v,0)}\right|
		& = \left|\sum_{j=0}^{2} e^{-\ii t(\lambda_j+k)/2} \left(\cos(\Lambda_j t/2)
		- \frac{\lambda_j-k}{\Lambda_j} \ii \sin(\Lambda_j t/2) \right) \ebra{u} \eigenA{j}{\overline{nK_2}} \eket{v}\right| \\
		& \approx\left|\sum_{j=0}^{2} (-1)^{j+1} \ebra{u} \eigenA{j}{\overline{nK_2}} \eket{v} \right|\\
		& =\left|\sum_{j=0}^{2} (-1)^{j+1} (-1)^{j} \ebra{u} \eigenA{j}{\overline{nK_2}} \eket{u} \right|\\
        & =\left|-\ebra{u}\left(\sum_{j=0}^{2} \eigenA{j}{\overline{nK_2}}\right) \eket{u} \right|\\
		& =1,
	\end{split}
\end{equation*}
which implies that there exists pretty good state transfer between $(u,0)$ and $(v,0)$ in $\overline{nK_2} \star H$.

\end{proof}


\end{document}